\tikzstyle directed=[postaction={decorate,decoration={markings,mark=at position #1 with {\arrow{>}}}}]
\newcommand{\hackcenter}[1]{\xy (0,0)*{#1}; \endxy}
\tikzset{->-/.style={decoration={markings, mark=at position #1 with {\arrow{>}}},postaction={decorate}}}
\tikzset{middlearrow/.style={ decoration={markings,mark= at position 0.5 with {\arrow{#1}} , },
postaction={decorate}}}
\newcommand{\scs}{\scriptstyle}
\theoremstyle{plain}
\newtheorem{theorem}{Theorem}
\newtheorem{proposition}[theorem]{Proposition}
\theoremstyle{definition}
\newtheorem{definition}[theorem]{Definition}
\newtheorem{remark}[theorem]{Remark}
\numberwithin{equation}{section}
\numberwithin{theorem}{section}
\numberwithin{equation}{section}
\let\tilde=\widetilde
\def\Z{{\mathbbm Z}}
\title{Khovanov-Lauda-Rouquier subalgebras and redotted Webster algebras}
\author{Yasuyoshi Yonezawa}
\email{yasuyoshi.yonezawa@gmail.com}
\address{Advanced Mathematical Institute \\
Osaka City University\\
Osaka, Japan}
\begin{document}
\maketitle

\begin{abstract}
We define Khovanov-Lauda-Rouquier subalgebras which are generalizations of redotted versions of Webster's tensor product algebras of type $A_1$ defined in \cite{KLSY}.
Quotient algebras of these subalgebras are isomorphic to Webster's tensor product algebras in general type.
\end{abstract}

\section{Introduction}

\subsection{Khovanov-Lauda-Rouquier algebras, the redotted Webster algebras and categorical braid group action}
Khovanov-Lauda and Rouquier defined algebras categorifying quantum groups associated to semisimple Lie algebras $\mathfrak{g}$ and their irreducible representations \cite{KL1, KL2, KL3, Rou2}.
In the case of  $\mathfrak{g} = \mathfrak{sl}_2$, the algebra is isomorphic to the nilHecke algebra.
In Khovanov-Lauda diagrammatic approach, generators of the algebra correspond to planar diagrams consisting of strands labelled by the simple roots of the Lie algebra $\mathfrak{g}$.

Webster defined algebras categorifying the tensor product $V_{\lambda_1}\otimes V_{\lambda_2}\otimes \cdots\otimes V_{\lambda_m}$ of irreducible representations of the quantum group associated to a semisimple Lie algebra $\mathfrak{g}$ \cite{Web}.
These algebras follow the Khovanov-Lauda diagrammatic approach of categorifications of quantum groups and their irreducible representations from \cite{Lau1, KL1, KL2, KL3}. 

In the context of $\mathfrak{sl}_2$, Khovanov-Sussan studied a deformation type of these tensor product algebras. In their context, the deformation led to additional dot-generators on red strands and additional diagrammatic relations \cite{KS}.
The algebras were called the redotted Webster algebra $W(n, k)$ in \cite{KS}, where there are $k$ black strands and $n$ red strands labelled by the fundamental representation of $\mathfrak{sl}_2$. 
These algebras were studied for constructing Webster’s categorical braid invariants in the homotopy category.
The algebra $W(n, 0)$ and the homotopy category of their modules are related to Soergel bimodules and a categorical braid group action in the homotopy category defined by Rouquier \cite{Rou-Soergel}.
Khovanov, Lauda, Sussan and the author extended the algebra $W(n,k)$ to $W(\mathbf{s}, k)$ where $\mathbf{s} = (s_1, . . . , s_m)$ is a tuple of natural numbers corresponding to arbitrary symmetric powers of the fundamental representation of $\mathfrak{sl}_2$ \cite{KLSY}.
We also constructed the braid group action on the homotopy category of $W(\mathbf{s}, k)$. 
Subsequently, in the type $A_n$, Webster studied a generalization for $W(\mathbf{s}, k)$ associated with Gelfand-Tsetlin modules \cite{Web-An}.

In this paper, we study these algebras in the context of general type.
We extend the algebra $W(\mathbf{s}, k)$ to the algebra $S(\lambda,\nu)$, where $\lambda$ is a sequence of weights of irreducible representations and $\nu$ is a sequence of the simple roots of the Lie algebra $\mathfrak{g}$, as Khovanov-Lauda-Rouquier subalgebras.
It seems that our Khovanov-Lauda-Rouquier subalgebra $S_{C}(\lambda,\nu)$ with specialized scalar parameters $t(\tilde{i},\tilde{j})$ is related to Webster's generalization of the type $A_n$.

\subsection{Motivation for link homology theory}
The categorification of the Jones polynomial is defined in the homotopy category of complexes over an additive category in \cite{Kh1}. 

Since the Jones polynomial is a link invariant associated with the quantum group of $\mathfrak{sl}_2$ and its 2-dimensional irreducible representation, similar efforts constructing link homologies associated with quantum groups and their representations have been made in various approaches.

For the $\mathfrak{sl}_2$ link invariant, its link homology is studied using a functor value defined in \cite{Kh2}, using cobordisms\cite{Dror}, using geometric approach\cite{CaKa-sl2}, and using Howe duality approach\cite{LQR}.
For the coloured $\mathfrak{sl}_2$ link invariant, its link homology is studied using cabling\cite{Kh05}, using a categorification of the Jones-Wenzl projectors\cite{CoopK0,Roz}, using Lie theoretic approach\cite{SS}, using a polynomial action\cite{Hog} and using a tensor product algebra approach \cite{KLSY}.
For the $\mathfrak{sl}_n$ link invariant (associated with the $n$-dimensional irreducible representation), its link homology is studied using matrix factorizations\cite{KR}, and using geometric approach\cite{CK-slm}, using Lie theoretic approach\cite{MS-O}, and using cobordisms\cite{Web-cob, SY}.
For HOMFLY-PT polynomial, its link homology is studied using matrix factorizations\cite{KhR2}, using Soergel bimodules\cite{Kho-triple}, and using geometric approach\cite{WW}.
For coloured $\mathfrak{sl}_n$ link invariant (associated with anti-symmetric tensor product of the $n$-dimensional irreducible representation), its link homology is studied using matrix factorizations\cite{Yo,Wu}, using Lie theoretic approach\cite{Su}, and using Howe duality approach\cite{CKL-skew, MY}.
For the general $\mathfrak{g}$ link invariant, its link homology is studied using a tensor product algebra approach in the derived category\cite{Web}.

Using Khovanov-Lauda-Rouquier subalgebras introduced in this paper, we will construct $\mathfrak{g}$ link homological invariant in the homotopy category.

\bigskip
\noindent
{\bf Acknowledgements.}
The author would like to thank Joshua Sussan for carefully reading this manuscript and giving helpful comments.
The author would like to thank Mikhail Khovanov and Aaron Lauda for helpful comments.

\section{Khovanov-Lauda-Rouquier algebras}\label{klr}
We recall Khovanov-Lauda-Rouquier (quiver Hecke) algebras based on \cite{CLau} which is a categorification of one-half of the quantum group associated to an arbitrary Cartan datum, introduced by Khovanov-Lauda, Rouquier\cite{KL1,KL2,Rou2}.

\subsection{Cartan datum and scalar parameter}\label{cartan}
A Cartan datum $(I, \cdot)$ consists of a finite set $I=\{1,2,...,N\}$ and a  symmetric bilinear form on $\Z [I]$ subject to the conditions
\begin{itemize}
\item $i\cdot i\in 2\Z_{>0}$ for any $i\in I$,
\item $2\frac{i\cdot j}{i\cdot i} $ is a non positive integer for any $i\not= j$ in $I$.
\end{itemize}
The associated matrix	
$$
C=(c(i,j))_{i,j\in I}:=\left(2\frac{i\cdot j}{i\cdot i}\right)_{i,j\in I}
$$
is a symmetrizable generalized Cartan matrix.

We set scalar parameters
\begin{eqnarray*}
Q_C&=&
\{t(i,j)\in \Bbbk^{\times}\,|\,i,j\in I\}
\cup
\{r(i)\in \Bbbk^{\times}\,|\, i\in I \}
\\
&&\qquad \cup
\{s^{pq}(i,j)\in\Bbbk\,|\,i,j\in I,i\not=j, p,q\in \Z_{> 0}, p (i\cdot i)+q (j\cdot j)=-2 (i\cdot j)\}
\end{eqnarray*}
satisfying
\begin{eqnarray}
\nonumber
t(i,i)=1 \text{ for all }i\in I,\quad
t(i,j) = t(j,i) \text{ when } i\cdot j = 0,\quad
s^{pq}(i,j)=s^{qp}(j,i).
\end{eqnarray}

For convenience, we set 
$$
s^{0q}(i,j)=s^{p0}(i,j)=0
$$ 
and
$$
s^{pq}(i,j)=0 \text{ when } p (i\cdot i)+q (j\cdot j)\not=-2 (i\cdot j).
$$ 

\subsection{Khovanov-Lauda-Rouquier algebra $R_n(Q_C)$}
Here, we recall Khovanov-Lauda-Rouquier algebra $R_n(Q_C)$ associated with a Cartan datum $(I,\cdot)$ and the scalar parameters $Q_C$\cite{CLau}.

Let $n\geq 0$ be an integer.
For a sequence $\mathbf{i}=(i_1,...,i_{n})\in I^n$, denote by $\mathbf{i}_j$ the $j$-th entry of $\mathbf{i}$.
The symmetric group $\mathfrak{S}_n$ naturally acts on $I^n$. For an element $\sigma\in\mathfrak{S}_n$, $\sigma\cdot (i_1,...,i_{n})=(i_{\sigma(1)},...,i_{\sigma(n)})$.

The Khovanov-Lauda-Rouquier algebra $R_n(Q_C)$ is the $\Z$-graded $\Bbbk$-algebra generated by
\begin{equation}
e(\mathbf{i})\quad(\mathbf{i} \in I^n),\qquad
x_j\quad (1\leq j\leq n),\qquad
\psi_k\quad (1\leq k \leq n-1),
\end{equation}
satisfying the following relations.

\begin{minipage}{0.4\textwidth}
\begin{align}
& e(\mathbf{i})e(\mathbf{j}) = \delta_{\mathbf{i},\mathbf{j}}e(\mathbf{i})
\\
& x_j e(\mathbf{i})=e(\mathbf{i})x_j
\\
& \psi_j e(\mathbf{i}) = e(\sigma_j\cdot\mathbf{i})\psi_j
\end{align}
\end{minipage}
\begin{minipage}{0.6\textwidth}
\begin{align}
& x_j x_k=x_k x_j
\\
&\psi_j x_k = x_k\psi_j  \quad \text{if $k\not=j$ and $k\not=j+1$}
\\
& \psi_j\psi_k = \psi_k\psi_j \quad \text{if $|j - k| > 1$}
\end{align}
\end{minipage}
\begin{minipage}{\textwidth}
\begin{align}
\label{dot-slide1}
&x_j \psi_j e(\mathbf{i}) - \psi_j x_{j+1} e(\mathbf{i})= \left\{
\begin{array}{ll}r(i) e(\mathbf{i}) & \text{if $\mathbf{i}_j=\mathbf{i}_{j+1}$}\\
0 & \text{if $\mathbf{i}_j\not=\mathbf{i}_{j+1}$}
\end{array}
\right.
\\
\label{dot-slide2}
&\psi_j x_j e(\mathbf{i}) - x_{j+1} \psi_j e(\mathbf{i})= \left\{
\begin{array}{ll}r(i) e(\mathbf{i}) & \text{if $\mathbf{i}_j=\mathbf{i}_{j+1}$}\\
0 & \text{if $\mathbf{i}_j\not=\mathbf{i}_{j+1}$}
\end{array}
\right.
\end{align}
\end{minipage}
\begin{minipage}{\textwidth}
\begin{align}
\label{RII}
&\psi_j^2 e(\mathbf{i})=\left\{
	\begin{array}{ll}
		0
		&\text{if } \mathbf{i}_j=\mathbf{i}_{j+1}\\
		t(\mathbf{i}_{j},\mathbf{i}_{j+1})e(\mathbf{i})
		&\text{if } \mathbf{i}_j\cdot\mathbf{i}_{j+1}=0\\
		\displaystyle(t(\mathbf{i}_{j},\mathbf{i}_{j+1})x_{j}^{-c(\mathbf{i}_{j},\mathbf{i}_{j+1})}+t(\mathbf{i}_{j+1},\mathbf{i}_{j})x_{j+1}^{-c(\mathbf{i}_{j+1},\mathbf{i}_{j})}+\sum_{p,q}s^{pq}(\mathbf{i}_{j},\mathbf{i}_{j+1})x_j^p x_{j+1}^q)  e(\mathbf{i})
		&\text{if } \mathbf{i}_j\cdot\mathbf{i}_{j+1}<0
	\end{array}\right.
\\
\label{RIII}
& (\psi_j\psi_{j+1} \psi_j-\psi_{j+1}\psi_j \psi_{j+1}) e(\mathbf{i}) \\
\nonumber
& \qquad
=\left\{
	\begin{array}{ll}
	 \displaystyle 
	 r(\mathbf{i}_{j}) t(\mathbf{i}_{j},\mathbf{i}_{j+1})\sum_{d_1+d_2=-c(\mathbf{i}_{j},\mathbf{i}_{j+1})-1}
	 x_j^{d_1}x_{j+2}^{d_2}e(\mathbf{i})&\\
	 \displaystyle 
	 \hspace{2cm}+r(\mathbf{i}_{j}) \sum_{p,q}s^{pq}(\mathbf{i}_{j},\mathbf{i}_{j+1})\sum_{0\leq k_1,k_2\atop k_1+k_2=p-1}x_{j}^{k_1}x_{j+1}^q x_{j+2}^{k_2}
	&\text{if } \mathbf{i}_{j}=\mathbf{i}_{j+2} \text{ and } c(\mathbf{i}_{j},\mathbf{i}_{j+1})+1\leq 0 \\
	0
	&\text{otherwise}.
	\end{array}\right.
\end{align}
\end{minipage}

A $\Z$-grading of these generators is defined by
\[
\deg (e(\mathbf{i}))=0,\quad 
\deg (x_j e(\mathbf{i}))=\mathbf{i}_j\cdot \mathbf{i}_j,\quad 
\deg (\psi_j e(\mathbf{i}))=-\mathbf{i}_j\cdot \mathbf{i}_{j+1}.
\]

\subsection{Diagrammatic description}\label{diagram}
We have a diagrammatic description of Khovanov-Lauda-Rouquier algebra $R_n(Q_C)$.

The generator $e(\mathbf{i})$ for $\mathbf{i}=(i_1,i_2,...,i_n) \in I^n$ is represented by the diagram consisting of vertical strands whose $j$-th strand (counting from the left for each $j$) is labeled with $i_j$ as follows.
\[
e(\mathbf{i})=\hackcenter{\begin{tikzpicture}[scale=0.6]
    \node at (0,1.75) {};
    \draw[thick] (-1.5,0) -- (-1.5,1.5)  node[pos=.35, shape=coordinate](DOT){};
    \node at (-1.5,-.25) {$\scs  i_1$};
    \draw[thick] (0,0) -- (0,1.5)  node[pos=.35, shape=coordinate](DOT){};
    \node at (0,-.25) {$\scs  i_j$};
    \draw[thick] (1.5,0) -- (1.5,1.5)  node[pos=.35, shape=coordinate](DOT){};
    \node at (1.5,-.25) {$\scs  i_n$};
    \node at (-.75,.75) {$\cdots$};
    \node at (.75,.75) {$\cdots$};
\end{tikzpicture}}
\]
Dots on strands correspond to the generators $x_j$.
The generator $x_j e(\mathbf{i})$ is represented by the dot on the $j$-th strand counting from the left of the $n$-strand diagram as follows.
\begin{eqnarray*}
x_j e(\mathbf{i})&=&\hackcenter{\begin{tikzpicture}[scale=0.6]
    \node at (0,1.75) {};
    \draw[thick] (0,0) -- (0,1.5)  node[pos=.35, shape=coordinate](DOT){};
    \filldraw  (DOT) circle (2.5pt);
    \node at (0,-.25) {$\scs  i_j$};
    \draw[thick] (-1.5,0) -- (-1.5,1.5) ;
    \node at (-1.5,-.25) {$\scs  i_1$};
    \draw[thick] (1.5,0) -- (1.5,1.5) ;
    \node at (1.5,-.25) {$\scs  i_n$};
    \node at (-.75,.75) {$\cdots$};
    \node at (.75,.75) {$\cdots$};
\end{tikzpicture}}
\end{eqnarray*}

For simplicity, $x_j^d e(\mathbf{i})$ is represented by the dot labeled with $d$.
\begin{eqnarray*}
x_j^d e(\mathbf{i})&=&\hackcenter{\begin{tikzpicture}[scale=0.6]
    \node at (0,1.75) {};
    \draw[thick] (0,0) -- (0,1.5)  node[pos=.35, shape=coordinate](DOT){};
    \filldraw  (DOT) circle (2.5pt);
    \node at (.25,.5) {$\scs  d$};
    \node at (0,-.25) {$\scs  i_j$};
    \draw[thick] (-1.5,0) -- (-1.5,1.5) ;
    \node at (-1.5,-.25) {$\scs  i_1$};
    \draw[thick] (1.5,0) -- (1.5,1.5) ;
    \node at (1.5,-.25) {$\scs  i_n$};
    \node at (-.75,.75) {$\cdots$};
    \node at (.75,.75) {$\cdots$};
\end{tikzpicture}}
\end{eqnarray*}

A crossing of strands corresponds to the generator $\psi_j$.
The generator $\psi_j e(\mathbf{i})$ is represented by a crossing of the $j$-th and ($j+1$)-th strands as follows.
\[
\psi_j e(\mathbf{i})=
\hackcenter{\begin{tikzpicture}[scale=0.6]
    \draw[thick] (0,0) .. controls (0,.75) and (1.5,.75) .. (1.5,1.5);
    \draw[thick] (1.5,0) .. controls (1.5,.75) and (0,.75) .. (0,1.5);
    \draw[thick] (-1.5,0) -- (-1.5,1.5) ;
    \draw[thick] (3,0) -- (3,1.5);
    \node at (-1.5,-.25) {$\scs  i_1$};
    \node at (0,-.25) {$\scs  i_j$};
    \node at (1.5,-.25) {$\scs  i_{j+1}$};
    \node at (3,-.25) {$\scs  i_n$};
    \node at (-.75,.75) {$\cdots$};
    \node at (2.25,.75) {$\cdots$};
\end{tikzpicture}}
\]

The product in $R_n(Q_C)$ is given by concatenation of diagrams.
For elements $x,y \in R_n(Q_C)$ with diagrammatic descriptions $D_x$ and  $D_y$, the product $x y$ is represented by the diagram composed of $D_x$ and  $D_y$ whose the bottom of $D_x$ and the top of $D_y$ are connected.

For simplicity, when it causes no confusion, we will omit vertical strands in diagrams.
For instance, we depict $x_j e(\mathbf{i})$ and $\psi_j e(\mathbf{i})$ by
\[
\hackcenter{\begin{tikzpicture}[scale=0.6]
    \draw[thick] (0,0) -- (0,1.5)  node[pos=.35, shape=coordinate](DOT){};
    \filldraw  (DOT) circle (2.5pt);
    \node at (0,-.2) {$\scs i_j $};
\end{tikzpicture}}
\qquad,\qquad
\hackcenter{\begin{tikzpicture}[scale=0.6]
    \draw[thick] (0,0) .. controls (0,.75) and (1.5,.75) .. (1.5,1.5);
    \draw[thick] (1.5,0) .. controls (1.5,.75) and (0,.75) .. (0,1.5);
    \node at (0,-.25) {$\scs  i_j$};
    \node at (1.5,-.25) {$\scs  i_{j+1}$};
\end{tikzpicture}} .
\]

The degrees of the generating diagrams are
\[
\deg
\left( \;
\hackcenter{\begin{tikzpicture}[scale=0.6]
    \draw[thick] (0,0) -- (0,1.5)  node[pos=.35, shape=coordinate](DOT){};
    \filldraw  (DOT) circle (2.5pt);
    \node at (0,-.2) {$\scs i_j$};
\end{tikzpicture}}\;
\right)
= i_j\cdot i_j
\qquad
\deg\left( \; \hackcenter{\begin{tikzpicture}[scale=0.6]
    \draw[thick] (0,0) .. controls (0,.75) and (1.5,.75) .. (1.5,1.5);
    \draw[thick] (1.5,0) .. controls (1.5,.75) and (0,.75) .. (0,1.5);
    \node at (0,-.25) {$\scs  i_j$};
    \node at (1.5,-.25) {$\scs  i_{j+1}$};
\end{tikzpicture}} \; \right) = -i_j\cdot i_{j+1}.
\]
 
The algebra relations \eqref{dot-slide1} and \eqref{dot-slide2} are
\[
\hackcenter{\begin{tikzpicture}[scale=0.8]
    \draw[thick] (0,0) .. controls (0,.75) and (.75,.75) .. (.75,1.5)
        node[pos=.25, shape=coordinate](DOT){};
    \draw[thick](.75,0) .. controls (.75,.75) and (0,.75) .. (0,1.5);
    \filldraw  (DOT) circle (2.5pt);
    \node at (.0,-.2) {$\scs i$};
    \node at (.75,-.2) {$\scs i$};
\end{tikzpicture}}
\quad -\quad
\hackcenter{\begin{tikzpicture}[scale=0.8]
    \draw[thick] (0,0) .. controls (0,.75) and (.75,.75) .. (.75,1.5)
        node[pos=.75, shape=coordinate](DOT){};
    \draw[thick](.75,0) .. controls (.75,.75) and (0,.75) .. (0,1.5);
    \filldraw  (DOT) circle (2.5pt);
    \node at (.0,-.2) {$\scs i$};
    \node at (.75,-.2) {$\scs i$};
\end{tikzpicture}}
\quad=\quad
r(i)\hackcenter{\begin{tikzpicture}[scale=0.8]
    \draw[thick] (0,0) --(0,1.5);
    \draw[thick](.75,0) -- (.75,1.5);
    \node at (.0,-.2) {$\scs i$};
    \node at (.75,-.2) {$\scs i$};
\end{tikzpicture}}
\quad=\quad
\hackcenter{\begin{tikzpicture}[scale=0.8]
    \draw[thick](0,0) .. controls (0,.75) and (.75,.75) .. (.75,1.5);
    \draw[thick] (.75,0) .. controls (.75,.75) and (0,.75) .. (0,1.5)
        node[pos=.75, shape=coordinate](DOT){};
    \filldraw  (DOT) circle (2.75pt);
    \node at (.0,-.2) {$\scs i$};
    \node at (.75,-.2) {$\scs i$};
\end{tikzpicture}}
\quad-\quad
\hackcenter{\begin{tikzpicture}[scale=0.8]
    \draw[thick](0,0) .. controls (0,.75) and (.75,.75) .. (.75,1.5);
    \draw[thick] (.75,0) .. controls (.75,.75) and (0,.75) .. (0,1.5)
        node[pos=.25, shape=coordinate](DOT){};
      \filldraw  (DOT) circle (2.75pt);
    \node at (.0,-.2) {$\scs i$};
    \node at (.75,-.2) {$\scs i$};
\end{tikzpicture}}
\]

\[
\hackcenter{\begin{tikzpicture}[scale=0.8]
    \draw[thick] (0,0) .. controls (0,.75) and (.75,.75) .. (.75,1.5)
        node[pos=.25, shape=coordinate](DOT){};
    \draw[thick](.75,0) .. controls (.75,.75) and (0,.75) .. (0,1.5);
    \filldraw  (DOT) circle (2.5pt);
    \node at (.0,-.2) {$\scs i$};
    \node at (.75,-.2) {$\scs j$};
\end{tikzpicture}}
\quad =\quad
\hackcenter{\begin{tikzpicture}[scale=0.8]
    \draw[thick] (0,0) .. controls (0,.75) and (.75,.75) .. (.75,1.5)
        node[pos=.75, shape=coordinate](DOT){};
    \draw[thick](.75,0) .. controls (.75,.75) and (0,.75) .. (0,1.5);
    \filldraw  (DOT) circle (2.5pt);
    \node at (.0,-.2) {$\scs i$};
    \node at (.75,-.2) {$\scs j$};
\end{tikzpicture}}
\qquad,\qquad
\hackcenter{\begin{tikzpicture}[scale=0.8]
    \draw[thick](0,0) .. controls (0,.75) and (.75,.75) .. (.75,1.5);
    \draw[thick] (.75,0) .. controls (.75,.75) and (0,.75) .. (0,1.5)
        node[pos=.75, shape=coordinate](DOT){};
    \filldraw  (DOT) circle (2.75pt);
    \node at (.0,-.2) {$\scs i$};
    \node at (.75,-.2) {$\scs j$};
\end{tikzpicture}}
\quad=\quad
\hackcenter{\begin{tikzpicture}[scale=0.8]
    \draw[thick](0,0) .. controls (0,.75) and (.75,.75) .. (.75,1.5);
    \draw[thick] (.75,0) .. controls (.75,.75) and (0,.75) .. (0,1.5)
        node[pos=.25, shape=coordinate](DOT){};
      \filldraw  (DOT) circle (2.75pt);
    \node at (.0,-.2) {$\scs i$};
    \node at (.75,-.2) {$\scs j$};
\end{tikzpicture}}
\quad \text{for } i\not=j.
\]

The algebra relations \eqref{RII} are
\[
\hackcenter{\begin{tikzpicture}[scale=0.8]
    \draw[thick] (0,0) .. controls ++(0,.5) and ++(0,-.5) .. (.75,.75);
    \draw[thick] (.75,0) .. controls ++(0,.5) and ++(0,-.5) .. (0,.75);
    \draw[thick] (0,.75) .. controls ++(0,.5) and ++(0,-.5) .. (.75,1.5);
    \draw[thick] (.75,.75) .. controls ++(0,.5) and ++(0,-.5) .. (0,1.5);
    \node at (0,-.2) {$\scs i$};
    \node at (.8,-.2) {$\scs j$};
\end{tikzpicture}}
 \;\; = \;\;
\left\{
\begin{array}{ll}
0&\text{if } i=j\\[1em]
t(i,j)\hackcenter{\begin{tikzpicture}[scale=0.8]
    \draw[thick] (0,0) -- (0,1.5);
    \draw[thick] (.75,0) -- (.75,1.5);
    \node at (0,-.2) {$\scs i$};
    \node at (.8,-.2) {$\scs j$};
    \end{tikzpicture}}
&\text{if } i\cdot j=0\\
t(i,j)
\hackcenter{\begin{tikzpicture}[scale=0.8]
    \draw[thick] (0,0) -- (0,1.5)  node[pos=.55, shape=coordinate](DOT){};
    \filldraw  (DOT) circle (2.5pt);
    \draw[thick] (.75,0) -- (.75,1.5);
    \node at (0,-.2) {$\scs i$};
    \node at (.8,-.2) {$\scs j$};
    \node at (-.7,.75) {$\scs -c(i,j)$};
\end{tikzpicture}}
+
t(j,i)
\hackcenter{\begin{tikzpicture}[scale=0.8]
    \draw[thick] (0,0) -- (0,1.5);
    \draw[thick] (.75,0) -- (.75,1.5) node[pos=.55, shape=coordinate](DOT2){};
    \filldraw  (DOT2) circle (2.75pt);
    \node at (0,-.2) {$\scs i$};
    \node at (.8,-.2) {$\scs j$};
    \node at (1.5,.75) {$\scs -c(j,i)$};
\end{tikzpicture}}
+
\displaystyle\sum_{p,q}s^{pq}(i,j)
\hackcenter{\begin{tikzpicture}[scale=0.8]
    \draw[thick] (0,0) -- (0,1.5);
    \draw[thick] (.75,0) -- (.75,1.5) node[pos=.55, shape=coordinate](DOT2){};
    \filldraw  (DOT) circle (2.5pt);
    \filldraw  (DOT2) circle (2.75pt);
    \node at (0,-.2) {$\scs i$};
    \node at (.8,-.2) {$\scs j$};
    \node at (1.1,.75) {$\scs q$};
    \node at (-.3,.75) {$\scs p$};
\end{tikzpicture}}
&\text{if } i\cdot j<0.
\end{array}
\right.
\]

The algebra relations \eqref{RIII} are

\[
\hackcenter{\begin{tikzpicture}[scale=0.8]
    \draw[thick] (0,0) .. controls ++(0,1) and ++(0,-1) .. (1.2,2);
    \draw[thick] (.6,0) .. controls ++(0,.5) and ++(0,-.5) .. (0,1.0);
    \draw[thick] (0,1.0) .. controls ++(0,.5) and ++(0,-.5) .. (0.6,2);
    \draw[thick] (1.2,0) .. controls ++(0,1) and ++(0,-1) .. (0,2);
    \node at (1.2,-.2) {$\scs k$};
    \node at (.6,-.2) {$\scs j$};
    \node at (0,-.2) {$\scs i$};
\end{tikzpicture}}
\;\; - \;\;
\hackcenter{\begin{tikzpicture}[scale=0.8]
    \draw[thick] (0,0) .. controls ++(0,1) and ++(0,-1) .. (1.2,2);
    \draw[thick] (.6,0) .. controls ++(0,.5) and ++(0,-.5) .. (1.2,1.0);
    \draw[thick] (1.2,1.0) .. controls ++(0,.5) and ++(0,-.5) .. (0.6,2.0);
    \draw[thick] (1.2,0) .. controls ++(0,1) and ++(0,-1) .. (0,2.0);
    \node at (1.2,-.2) {$\scs k$};
    \node at (.6,-.2) {$\scs j$};
    \node at (0,-.2) {$\scs i$};
\end{tikzpicture}}
\;\; = \;\;
\left\{
\begin{array}{ll}
\displaystyle r(i)t(i,j)\sum_{a+b=-c(i,j)-1}
\hackcenter{\begin{tikzpicture}[scale=0.8]
    \draw[thick] (0,0) -- (0,2)  node[pos=.5, shape=coordinate](DOT){};
    \draw[thick] (.6,0) --  (.6,2) node[pos=.5, shape=coordinate](DOT2){};
    \draw[thick] (1.2,0) -- (1.2,2)  node[pos=.5, shape=coordinate](DOT1){};
    \filldraw  (DOT) circle (2.5pt);
    \filldraw  (DOT1) circle (2.5pt);
 \node at (-.4,1) {$\scs a$};
    \node at (1.6,1) {$\scs b$};
    \node at (1.2,-.2) {$\scs i$};
    \node at (.6,-.2) {$\scs j$};
    \node at (0,-.2) {$\scs i$};
\end{tikzpicture}}
\\
\displaystyle \qquad +r(i) \sum_{p,q}s^{pq}(i,j)\sum_{a+b=p-1}
\hackcenter{\begin{tikzpicture}[scale=0.8]
    \draw[thick] (0,0) -- (0,2)  node[pos=.5, shape=coordinate](DOT){};
    \draw[thick] (.6,0) --  (.6,2) node[pos=.5, shape=coordinate](DOT2){};
    \draw[thick] (1.2,0) -- (1.2,2)  node[pos=.5, shape=coordinate](DOT1){};
    \filldraw  (DOT) circle (2.5pt);
    \filldraw  (DOT1) circle (2.5pt);
    \filldraw  (DOT2) circle (2.5pt);
 \node at (-.4,1) {$\scs a$};
    \node at (1.6,1) {$\scs b$};
    \node at (.8,1.2) {$\scs q$};
    \node at (1.2,-.2) {$\scs i$};
    \node at (.6,-.2) {$\scs j$};
    \node at (0,-.2) {$\scs i$};
\end{tikzpicture}}
& \text{if } i=k, c(i,j)+1\leq 0
\\\\
0 & \text{otherwise.}
\end{array}
\right.
\]

\subsection{Example of type $A_1$}\label{typeA}
When $I=\{i\}$ and $n\geq 0$, Khovanov-Lauda-Rouquier algebra $R_n(Q_C)$ is isomorphic to the nil-Hecke ring $\mathrm{NH}_n$.

For each permutation $w\in \mathfrak{S}_n$ let $\psi_w = \psi_{j_0}\cdots\psi_{j_r}$, where $s_{j_0}\cdots s_{j_r}$ is a minimal presentation of $w$. This element does not depend on the choice of presentation.
The element
$$
e_{n}:=x_1^{n-1} x_2^{n-2}\cdots x_{n-1}\psi_{w_0}
$$
is a primitive idempotent in $\mathrm{NH}_n$, where $w_0$ is the longest element of $\mathfrak{S}_n$.
This element is diagrammatically described by
\begin{equation}
\label{idempotent}
\hackcenter{\begin{tikzpicture}[scale=0.8]
    \draw[thick] (0,0) -- (0,2.15);
    \draw[thick] (1.5,0) -- (1.5,2.15);
    \draw[thick] (3,0) -- (3,2.15);
    \draw[thick] (4.5,0) -- (4.5,2.15);
    \draw[thick] (6,0) -- (6,2.15);
    \node at (2.25,1.85) {$\cdots$};
    \node at (2.25,0.3) {$\cdots$};
    \fill[white] (-0.1,0.5) rectangle (6.1,1.5);
    \draw[thick] (-0.1,0.5) rectangle (6.1,1.5);
    \node at (3,1) {\Large $e_n$};
\end{tikzpicture}}
\quad :=
\hackcenter{\begin{tikzpicture}[scale=0.8]
    \draw[thick] (0,0) .. controls ++(0,1) and ++(0,-1) .. (6,2);
    \draw[thick] (6,0) .. controls ++(0,1) and ++(0,-1) .. (0,2) node[pos=1, shape=coordinate](DOT1){};
    \draw[thick] (4.5,0) .. controls ++(0,0.25) and ++(0,-0.25) .. (6,0.5);
    \draw[thick] (6,0.5) .. controls ++(0,0.75) and ++(0,-0.75) .. (1.5,2) node[pos=1, shape=coordinate](DOT2){};
    \draw[thick] (3,0) .. controls ++(0,0.5) and ++(0,-0.5) .. (6,1);
    \draw[thick] (6,1) .. controls ++(0,0.5) and ++(0,-0.5) .. (3,2) node[pos=1, shape=coordinate](DOT3){};
    \draw[thick] (1.5,0) .. controls ++(0,0.75) and ++(0,-0.75) .. (6,1.5);
    \draw[thick] (6,1.5) .. controls ++(0,0.25) and ++(0,-0.25) .. (4.5,2) node[pos=1, shape=coordinate](DOT4){};
    \draw[thick] (0,2) -- (0,2.25);
    \draw[thick] (1.5,2) -- (1.5,2.25);
    \draw[thick] (3,2) -- (3,2.25);
    \draw[thick] (4.5,2) -- (4.5,2.25);
    \draw[thick] (6,2) -- (6,2.25);
    \filldraw  (DOT1) circle (2.5pt);
    \filldraw  (DOT2) circle (2.5pt);
    \filldraw  (DOT3) circle (2.5pt);
    \filldraw  (DOT4) circle (2.5pt);
    \node at (2.25,2.15) {$\scs \cdots$};
    \node at (2.25,0.05) {$\scs \cdots$};
    \node at (-0.5,2.1) {$\scs n-1$};
    \node at (1.0,2.1) {$\scs n-2$};
    \node at (3.3,2.1) {$\scs 2$};
\end{tikzpicture}},
\end{equation}
where we omit the label $i$ on the strands.
\section{Khovanov-Lauda-Rouquier algebras of extended Cartan datum}

First, we introduce an extended Cartan datum $(\tilde{I},\cdot)$, and then we define Khovanov-Lauda-Rouquier algebra $R_n(\overline{Q}_{\tilde{C}})$, where $\tilde{C}$ is the associated matrix of the Cartan datum $(\tilde{I},\cdot)$ and $\overline{Q}_{\tilde{C}}$ is a specialization of scalar parameters $Q_{\tilde{C}}$.
In the next section, we define the Khovanov-Lauda-Rouquier subalgebra $S_{C}(\lambda,\nu)$ of $R_n(\overline{Q}_{\tilde{C}})$.

\subsection{Extended Cartan datum $(\tilde{I},\cdot)$}
\begin{definition}
The Cartan datum $(\tilde{I},\cdot)$ associated with $(I,\cdot)$ is defined by
\begin{itemize}
\item $\tilde{I}=I \cup \bar{I}$, where $\bar{I}=\{\,\bar{i}\, |\, i\in I\}$,
\item The bilinear form on $\Z[\tilde{I}]$: For $\tilde{i}$, $\tilde{j} \in \tilde{I}$,
\[
\tilde{i}\cdot \tilde{j}=\left\{
\begin{array}{ll}
\tilde{i}\cdot \tilde{j} &\text{if }\tilde{i}, \tilde{j} \in I\\[.5em]
-\frac{k\cdot k}{2} &\text{if }(\tilde{i},\tilde{j})=(k,\bar{k})\in I \times \bar{I} \text{ or }(\tilde{i},\tilde{j})=(\bar{k},k) \in \bar{I}\times I  \\[.5em]
k \cdot k& \text{if }\tilde{i}=\tilde{j}=\bar{k}\in \bar{I}\\[.5em]
0 & \text{otherwise.}
\end{array}
\right.
\]
\end{itemize}
Denote by $\tilde{C}$ the symmetrizable generalized Cartan matrix associated with $(\tilde{I},\cdot)$ and by $Q_{\tilde{C}}$ the scalar parameters, defined in Section \ref{cartan}.

\end{definition}

\subsection{Scalar parameters $\overline{Q}_{\tilde{C}}$}
For defining the algebra $S_{C}(\lambda,\nu)$, we use typical scalar parameters associated with Cartan datum $(\tilde{I},\cdot)$.
\begin{definition}
For Cartan datum $(\tilde{I},\cdot)$, scalar parameters
\begin{eqnarray*}
\overline{Q}_{\tilde{C}} &=&\{t(\tilde{i},\tilde{j})\in \Bbbk^{\times}\,|\,\tilde{i},\tilde{j}\in \tilde{I}\}
\cup
\{r(\tilde{i})\in \Bbbk^{\times}\,|\, \tilde{i}\in \tilde{I} \}
\\
&&\qquad
\cup
\{s^{pq}(\tilde{i},\tilde{j})\in\Bbbk\,|\,\tilde{i},\tilde{j}\in \tilde{I},\tilde{i}\not=\tilde{j}, p,q\in \Z_{> 0}, p (\tilde{i}\cdot \tilde{i})+q (\tilde{j}\cdot \tilde{j})=-2 (\tilde{i}\cdot \tilde{j})\}
\end{eqnarray*}
are defined by
\begin{eqnarray}
\label{scalar}
t(\tilde{i},\tilde{j})&=&\left\{\begin{array}{ll}
-1&\text{if}\quad(\tilde{i},\tilde{j})=(\bar{k},k) \in \bar{I}\times I \\[.5em]
1&\text{if}\quad(\tilde{i},\tilde{j})=(k,\bar{k}) \in I\times \bar{I} \\[.5em]
1&\text{if}\quad\tilde{i}\cdot\tilde{j}=0 \\[.5em]
t(\tilde{i},\tilde{j})&\text{otherwise}
\end{array}
\right.
\\
\nonumber
r(\tilde{i})&=&1\\
\nonumber
s^{pq}(\tilde{i},\tilde{j})&=&0.
\end{eqnarray}
\end{definition}

We discuss the subalgebra $S_{C}(\lambda,\nu)$ of Khovanov-Lauda-Rouquier algebra $R_n(\overline{Q}_{\tilde{C}})$ in the next section.

\subsection{Diagrammatic description of $R_n(\overline{Q}_{\tilde{C}})$}
We have a diagrammatic description of Khovanov-Lauda-Rouquier algebras as defined in Section \ref{diagram}.
The diagrammatic description for $R_n(\overline{Q}_{\tilde{C}})$ is the same in Section \ref{diagram}, except that we use solid lines and dashed lines in diagrams.
A strand labeled with an element of $I$ is drawn in solid and a strand labeled with an element of $\bar{I}$ is drawn in dashed.
That is, the generator $e(\mathbf{i})$ for $\mathbf{i}=(\tilde{i}_1,\tilde{i}_2,...,\tilde{i}_n) \in \tilde{I}^n$ is represented by the diagram consisting of solid and dashed vertical strands whose $j$-th strand is a solid strand labeled with $\tilde{i}_j\in I$ or a dashed strand labeled with $\tilde{i}_j\in \bar{I}$.

For instance, in the case $I=\{1,2,3\}$ and $\mathbf{i}=(1,2,\bar{3},2,3,\bar{1})$, we have
\[
e(\mathbf{i})=\hackcenter{\begin{tikzpicture}[scale=0.6]
    \node at (0,1.75) {};
    \draw[thick] (-1.5,0) -- (-1.5,1.5)  node[pos=.35, shape=coordinate](DOT){};
    \node at (-1.5,-.25) {$\scs  1$};
    \draw[thick] (-.75,0) -- (-.75,1.5)  node[pos=.35, shape=coordinate](DOT){};
    \node at (-.75,-.25) {$\scs  2$};
    \draw[densely dashed,thick] (0,0) -- (0,1.5)  node[pos=.35, shape=coordinate](DOT){};
    \node at (0,-.25) {$\scs  \bar{3}$};
    \draw[thick] (.75,0) -- (.75,1.5)  node[pos=.35, shape=coordinate](DOT){};
    \node at (.75,-.25) {$\scs  2$};
    \draw[thick] (1.5,0) -- (1.5,1.5)  node[pos=.35, shape=coordinate](DOT){};
    \node at (1.5,-.25) {$\scs  3$};
    \draw[densely dashed,thick] (2.25,0) -- (2.25,1.5)  node[pos=.35, shape=coordinate](DOT){};
    \node at (2.25,-.25) {$\scs  \bar{1}$};
\end{tikzpicture}}
\]

The generator $x_j e(\mathbf{i})$ is represented by a dot on the $j$-th strand as follows.
\begin{eqnarray*}
x_j e(\mathbf{i})&=&\left\{
\begin{array}{ll}
\hackcenter{\begin{tikzpicture}[scale=0.6]
    \node at (0,1.75) {};
    \draw[thick] (0,0) -- (0,1.5)  node[pos=.35, shape=coordinate](DOT){};
    \filldraw  (DOT) circle (2.5pt);
    \node at (0,-.35) {$\scs  \tilde{i}_j$};
    \draw[thick] (-1.5,0) -- (-1.5,1.5) ;
    \node at (-1.5,-.35) {$\scs  \tilde{i}_1$};
    \draw[thick] (1.5,0) -- (1.5,1.5) ;
    \node at (1.5,-.35) {$\scs  \tilde{i}_n$};
    \node at (-.75,.75) {$\cdots$};
    \node at (.75,.75) {$\cdots$};
\end{tikzpicture}}
& \text{if} \quad\tilde{i}_j\in I\\
\hackcenter{\begin{tikzpicture}[scale=0.6]
    \node at (0,1.75) {};
    \draw[densely dashed,thick] (0,0) -- (0,1.5)  node[pos=.35, shape=coordinate](DOT){};
    \filldraw  (DOT) circle (2.5pt);
    \node at (0,-.35) {$\scs  \tilde{i}_j$};
    \draw[thick] (-1.5,0) -- (-1.5,1.5) ;
    \node at (-1.5,-.35) {$\scs  \tilde{i}_1$};
    \draw[thick] (1.5,0) -- (1.5,1.5) ;
    \node at (1.5,-.35) {$\scs  \tilde{i}_n$};
    \node at (-.75,.75) {$\cdots$};
    \node at (.75,.75) {$\cdots$};
\end{tikzpicture}}
& \text{if} \quad\tilde{i}_j\in \bar{I}
\end{array}
\right.
\end{eqnarray*}

The generator $\psi_j e(\mathbf{i})$ is represented by a crossing of the $j$-th and ($j+1$)-th strands as follows.
\[
\psi_j e(\mathbf{i})=
\left\{
\begin{array}{ll}
\hackcenter{\begin{tikzpicture}[scale=0.6]
    \draw[thick] (0,0) .. controls (0,.75) and (1.5,.75) .. (1.5,1.5);
    \draw[thick] (1.5,0) .. controls (1.5,.75) and (0,.75) .. (0,1.5);
    \draw[thick] (-1.5,0) -- (-1.5,1.5) ;
    \draw[thick] (3,0) -- (3,1.5);
    \node at (-1.5,-.35) {$\scs  \tilde{i}_1$};
    \node at (0,-.35) {$\scs  \tilde{i}_j$};
    \node at (1.5,-.35) {$\scs  \tilde{i}_{j+1}$};
    \node at (3,-.35) {$\scs  \tilde{i}_n$};
    \node at (-.75,.75) {$\cdots$};
    \node at (2.25,.75) {$\cdots$};
\end{tikzpicture}}&\text{if}\quad \tilde{i}_j,\tilde{i}_{j+1}\in I\\[.5em]
\hackcenter{\begin{tikzpicture}[scale=0.6]
    \draw[thick] (0,0) .. controls (0,.75) and (1.5,.75) .. (1.5,1.5);
    \draw[densely dashed,thick] (1.5,0) .. controls (1.5,.75) and (0,.75) .. (0,1.5);
    \draw[thick] (-1.5,0) -- (-1.5,1.5) ;
    \draw[thick] (3,0) -- (3,1.5);
    \node at (-1.5,-.35) {$\scs  \tilde{i}_1$};
    \node at (0,-.35) {$\scs  \tilde{i}_j$};
    \node at (1.5,-.35) {$\scs  \tilde{i}_{j+1}$};
    \node at (3,-.35) {$\scs  \tilde{i}_n$};
    \node at (-.75,.75) {$\cdots$};
    \node at (2.25,.75) {$\cdots$};
\end{tikzpicture}}&\text{if}\quad \tilde{i}_j\in I,\tilde{i}_{j+1}\in \bar{I}\\[.5em]
\hackcenter{\begin{tikzpicture}[scale=0.6]
    \draw[densely dashed,thick] (0,0) .. controls (0,.75) and (1.5,.75) .. (1.5,1.5);
    \draw[thick] (1.5,0) .. controls (1.5,.75) and (0,.75) .. (0,1.5);
    \draw[thick] (-1.5,0) -- (-1.5,1.5) ;
    \draw[thick] (3,0) -- (3,1.5);
    \node at (-1.5,-.35) {$\scs  \tilde{i}_1$};
    \node at (0,-.35) {$\scs  \tilde{i}_j$};
    \node at (1.5,-.35) {$\scs  \tilde{i}_{j+1}$};
    \node at (3,-.35) {$\scs  \tilde{i}_n$};
    \node at (-.75,.75) {$\cdots$};
    \node at (2.25,.75) {$\cdots$};
\end{tikzpicture}}&\text{if}\quad \tilde{i}_j\in \bar{I},\tilde{i}_{j+1}\in I\\[.5em]
\hackcenter{\begin{tikzpicture}[scale=0.6]
    \draw[densely dashed,thick] (0,0) .. controls (0,.75) and (1.5,.75) .. (1.5,1.5);
    \draw[densely dashed,thick] (1.5,0) .. controls (1.5,.75) and (0,.75) .. (0,1.5);
    \draw[thick] (-1.5,0) -- (-1.5,1.5) ;
    \draw[thick] (3,0) -- (3,1.5);
    \node at (-1.5,-.35) {$\scs  \tilde{i}_1$};
    \node at (0,-.35) {$\scs  \tilde{i}_j$};
    \node at (1.5,-.35) {$\scs  \tilde{i}_{j+1}$};
    \node at (3,-.35) {$\scs  \tilde{i}_n$};
    \node at (-.75,.75) {$\cdots$};
    \node at (2.25,.75) {$\cdots$};
\end{tikzpicture}}&\text{if}\quad \tilde{i}_j,\tilde{i}_{j+1}\in \bar{I}
\end{array}
\right.
\]

\noindent
$\bullet$
The relations of \eqref{dot-slide1} and \eqref{dot-slide2} in $R_n(\overline{Q}_{\tilde{C}})$:

\[
\hackcenter{\begin{tikzpicture}[scale=0.8]
    \draw[thick] (0,0) .. controls (0,.75) and (.75,.75) .. (.75,1.5)
        node[pos=.25, shape=coordinate](DOT){};
    \draw[thick](.75,0) .. controls (.75,.75) and (0,.75) .. (0,1.5);
    \filldraw  (DOT) circle (2.5pt);
    \node at (.0,-.2) {$\scs i$};
    \node at (.75,-.2) {$\scs i$};
\end{tikzpicture}}
\quad -\quad
\hackcenter{\begin{tikzpicture}[scale=0.8]
    \draw[thick] (0,0) .. controls (0,.75) and (.75,.75) .. (.75,1.5)
        node[pos=.75, shape=coordinate](DOT){};
    \draw[thick](.75,0) .. controls (.75,.75) and (0,.75) .. (0,1.5);
    \filldraw  (DOT) circle (2.5pt);
    \node at (.0,-.2) {$\scs i$};
    \node at (.75,-.2) {$\scs i$};
\end{tikzpicture}}
\quad=\quad
\hackcenter{\begin{tikzpicture}[scale=0.8]
    \draw[thick] (0,0) --(0,1.5);
    \draw[thick](.75,0) -- (.75,1.5);
    \node at (.0,-.2) {$\scs i$};
    \node at (.75,-.2) {$\scs i$};
\end{tikzpicture}}
\quad=\quad
\hackcenter{\begin{tikzpicture}[scale=0.8]
    \draw[thick](0,0) .. controls (0,.75) and (.75,.75) .. (.75,1.5);
    \draw[thick] (.75,0) .. controls (.75,.75) and (0,.75) .. (0,1.5)
        node[pos=.75, shape=coordinate](DOT){};
    \filldraw  (DOT) circle (2.75pt);
    \node at (.0,-.2) {$\scs i$};
    \node at (.75,-.2) {$\scs i$};
\end{tikzpicture}}
\quad-\quad
\hackcenter{\begin{tikzpicture}[scale=0.8]
    \draw[thick](0,0) .. controls (0,.75) and (.75,.75) .. (.75,1.5);
    \draw[thick] (.75,0) .. controls (.75,.75) and (0,.75) .. (0,1.5)
        node[pos=.25, shape=coordinate](DOT){};
      \filldraw  (DOT) circle (2.75pt);
    \node at (.0,-.2) {$\scs i$};
    \node at (.75,-.2) {$\scs i$};
\end{tikzpicture}}
\]

\[
\hackcenter{\begin{tikzpicture}[scale=0.8]
    \draw[densely dashed,thick] (0,0) .. controls (0,.75) and (.75,.75) .. (.75,1.5)
        node[pos=.25, shape=coordinate](DOT){};
    \draw[densely dashed,thick](.75,0) .. controls (.75,.75) and (0,.75) .. (0,1.5);
    \filldraw  (DOT) circle (2.5pt);
    \node at (.0,-.2) {$\scs \bar{i}$};
    \node at (.75,-.2) {$\scs \bar{i}$};
\end{tikzpicture}}
\quad -\quad
\hackcenter{\begin{tikzpicture}[scale=0.8]
    \draw[densely dashed,thick] (0,0) .. controls (0,.75) and (.75,.75) .. (.75,1.5)
        node[pos=.75, shape=coordinate](DOT){};
    \draw[densely dashed,thick](.75,0) .. controls (.75,.75) and (0,.75) .. (0,1.5);
    \filldraw  (DOT) circle (2.5pt);
    \node at (.0,-.2) {$\scs \bar{i}$};
    \node at (.75,-.2) {$\scs \bar{i}$};
\end{tikzpicture}}
\quad=\quad
\hackcenter{\begin{tikzpicture}[scale=0.8]
    \draw[densely dashed,thick] (0,0) --(0,1.5);
    \draw[densely dashed,thick](.75,0) -- (.75,1.5);
    \node at (.0,-.2) {$\scs \bar{i}$};
    \node at (.75,-.2) {$\scs \bar{i}$};
\end{tikzpicture}}
\quad=\quad
\hackcenter{\begin{tikzpicture}[scale=0.8]
    \draw[densely dashed,thick](0,0) .. controls (0,.75) and (.75,.75) .. (.75,1.5);
    \draw[densely dashed,thick] (.75,0) .. controls (.75,.75) and (0,.75) .. (0,1.5)
        node[pos=.75, shape=coordinate](DOT){};
    \filldraw  (DOT) circle (2.75pt);
    \node at (.0,-.2) {$\scs \bar{i}$};
    \node at (.75,-.2) {$\scs \bar{i}$};
\end{tikzpicture}}
\quad-\quad
\hackcenter{\begin{tikzpicture}[scale=0.8]
    \draw[densely dashed,thick](0,0) .. controls (0,.75) and (.75,.75) .. (.75,1.5);
    \draw[densely dashed,thick] (.75,0) .. controls (.75,.75) and (0,.75) .. (0,1.5)
        node[pos=.25, shape=coordinate](DOT){};
      \filldraw  (DOT) circle (2.75pt);
    \node at (.0,-.2) {$\scs \bar{i}$};
    \node at (.75,-.2) {$\scs \bar{i}$};
\end{tikzpicture}}
\]

\[
\hackcenter{\begin{tikzpicture}[scale=0.8]
    \draw[thick] (0,0) .. controls (0,.75) and (.75,.75) .. (.75,1.5)
        node[pos=.25, shape=coordinate](DOT){};
    \draw[thick](.75,0) .. controls (.75,.75) and (0,.75) .. (0,1.5);
    \filldraw  (DOT) circle (2.5pt);
    \node at (.0,-.2) {$\scs \tilde{i}$};
    \node at (.75,-.2) {$\scs \tilde{j}$};
\end{tikzpicture}}
\quad =\quad
\hackcenter{\begin{tikzpicture}[scale=0.8]
    \draw[thick] (0,0) .. controls (0,.75) and (.75,.75) .. (.75,1.5)
        node[pos=.75, shape=coordinate](DOT){};
    \draw[thick](.75,0) .. controls (.75,.75) and (0,.75) .. (0,1.5);
    \filldraw  (DOT) circle (2.5pt);
    \node at (.0,-.2) {$\scs \tilde{i}$};
    \node at (.75,-.2) {$\scs \tilde{j}$};
\end{tikzpicture}}
\qquad \quad
\hackcenter{\begin{tikzpicture}[scale=0.8]
    \draw[thick](0,0) .. controls (0,.75) and (.75,.75) .. (.75,1.5);
    \draw[thick] (.75,0) .. controls (.75,.75) and (0,.75) .. (0,1.5)
        node[pos=.75, shape=coordinate](DOT){};
    \filldraw  (DOT) circle (2.75pt);
    \node at (.0,-.2) {$\scs \tilde{i}$};
    \node at (.75,-.2) {$\scs \tilde{j}$};
\end{tikzpicture}}
\quad=\quad
\hackcenter{\begin{tikzpicture}[scale=0.8]
    \draw[thick](0,0) .. controls (0,.75) and (.75,.75) .. (.75,1.5);
    \draw[thick] (.75,0) .. controls (.75,.75) and (0,.75) .. (0,1.5)
        node[pos=.25, shape=coordinate](DOT){};
      \filldraw  (DOT) circle (2.75pt);
    \node at (.0,-.2) {$\scs \tilde{i}$};
    \node at (.75,-.2) {$\scs \tilde{j}$};
\end{tikzpicture}}
\qquad \text{for } \quad\tilde{i}\not=\tilde{j},
\]
where the type of lines in the above diagrams is solid or dashed depends on labels $\tilde{i}$ and $\tilde{j}$. 
The type of lines is omitted in the above last two diagrams.

\noindent 
$\bullet$
The relations of \eqref{RII} in $R_n(\overline{Q}_{\tilde{C}})$:
\begin{equation}\label{RII-diag}
\hackcenter{\begin{tikzpicture}[scale=0.8]
    \draw[thick] (0,0) .. controls ++(0,.5) and ++(0,-.5) .. (.75,.75);
    \draw[thick] (.75,0) .. controls ++(0,.5) and ++(0,-.5) .. (0,.75);
    \draw[thick] (0,.75) .. controls ++(0,.5) and ++(0,-.5) .. (.75,1.5);
    \draw[thick] (.75,.75) .. controls ++(0,.5) and ++(0,-.5) .. (0,1.5);
    \node at (0,-.2) {$\scs i$};
    \node at (.8,-.2) {$\scs j$};
\end{tikzpicture}}
 \;\; = \;\;
\left\{
\begin{array}{ll}
0&\text{if } i=j\\[1em]
\hackcenter{\begin{tikzpicture}[scale=0.8]
    \draw[thick] (0,0) -- (0,1.5);
    \draw[thick] (.75,0) -- (.75,1.5);
    \node at (0,-.2) {$\scs i$};
    \node at (.8,-.2) {$\scs j$};
    \end{tikzpicture}}
&\text{if } i\cdot j=0\\[1em]
t(i,j)
\txt{
\hackcenter{\begin{tikzpicture}[scale=0.8]
    \draw[thick] (0,0) -- (0,1.5)  node[pos=.55, shape=coordinate](DOT){};
    \filldraw  (DOT) circle (2.5pt);
    \draw[thick] (.75,0) -- (.75,1.5);
    \node at (0,-.2) {$\scs i$};
    \node at (.8,-.2) {$\scs j$};
    \node at (-.7,.75) {$\scs -c(i,j)$};
\end{tikzpicture}}
$+t(j,i)$
\hackcenter{\begin{tikzpicture}[scale=0.8]
    \draw[thick] (0,0) -- (0,1.5);
    \draw[thick] (.75,0) -- (.75,1.5) node[pos=.55, shape=coordinate](DOT2){};
    \filldraw  (DOT2) circle (2.75pt);
    \node at (0,-.2) {$\scs i$};
    \node at (.8,-.2) {$\scs j$};
    \node at (1.5,.75) {$\scs -c(j,i)$};
\end{tikzpicture}}
}
&\text{if } i\cdot j<0
\end{array}
\right.
\end{equation}

\begin{equation}\label{R2-thin-thick}
\hackcenter{\begin{tikzpicture}[scale=0.8]
    \draw[densely dashed,thick] (0,0) .. controls ++(0,.5) and ++(0,-.5) .. (.75,.75);
    \draw[thick] (.75,0) .. controls ++(0,.5) and ++(0,-.5) .. (0,.75);
    \draw[thick] (0,.75) .. controls ++(0,.5) and ++(0,-.5) .. (.75,1.5);
    \draw[densely dashed,thick] (.75,.75) .. controls ++(0,.5) and ++(0,-.5) .. (0,1.5);
    \node at (0,-.2) {$\scs \bar{i}$};
    \node at (.8,-.2) {$\scs j$};
\end{tikzpicture}}
 \;\; = \;\;
\left\{
\begin{array}{ll}
\text{$-$
\hackcenter{\begin{tikzpicture}[scale=0.8]
    \draw[densely dashed,thick] (0,0) -- (0,1.5)  node[pos=.55, shape=coordinate](DOT){};
    \filldraw  (DOT) circle (2.5pt);
    \draw[thick] (.75,0) -- (.75,1.5);
    \node at (0,-.2) {$\scs \bar{i}$};
    \node at (.8,-.2) {$\scs i$};
\end{tikzpicture}}
$+$
\hackcenter{\begin{tikzpicture}[scale=0.8]
    \draw[densely dashed,thick] (0,0) -- (0,1.5);
    \draw[thick] (.75,0) -- (.75,1.5) node[pos=.55, shape=coordinate](DOT2){};
    \filldraw  (DOT2) circle (2.75pt);
    \node at (0,-.2) {$\scs \bar{i}$};
    \node at (.8,-.2) {$\scs i$};
\end{tikzpicture}}
}
&\text{if } i=j\\[1em]
\hackcenter{\begin{tikzpicture}[scale=0.8]
    \draw[densely dashed,thick] (0,0) -- (0,1.5);
    \draw[thick] (.75,0) -- (.75,1.5);
    \node at (0,-.2) {$\scs \bar{i}$};
    \node at (.8,-.2) {$\scs j$};
    \end{tikzpicture}}
&\text{otherwise}
\end{array}
\right.
\hackcenter{\begin{tikzpicture}[scale=0.8]
    \draw[thick] (0,0) .. controls ++(0,.5) and ++(0,-.5) .. (.75,.75);
    \draw[densely dashed,thick] (.75,0) .. controls ++(0,.5) and ++(0,-.5) .. (0,.75);
    \draw[densely dashed,thick] (0,.75) .. controls ++(0,.5) and ++(0,-.5) .. (.75,1.5);
    \draw[thick] (.75,.75) .. controls ++(0,.5) and ++(0,-.5) .. (0,1.5);
    \node at (0,-.2) {$\scs i$};
    \node at (.8,-.2) {$\scs \bar{j}$};
\end{tikzpicture}}
 \;\; = \;\;
\left\{
\begin{array}{ll}
\text{\hackcenter{\begin{tikzpicture}[scale=0.8]
    \draw[thick] (0,0) -- (0,1.5)  node[pos=.55, shape=coordinate](DOT){};
    \filldraw  (DOT) circle (2.5pt);
    \draw[densely dashed,thick] (.75,0) -- (.75,1.5);
    \node at (0,-.2) {$\scs i$};
    \node at (.8,-.2) {$\scs \bar{i}$};
\end{tikzpicture}}
$-$
\hackcenter{\begin{tikzpicture}[scale=0.8]
    \draw[thick] (0,0) -- (0,1.5);
    \draw[densely dashed,thick] (.75,0) -- (.75,1.5) node[pos=.55, shape=coordinate](DOT2){};
    \filldraw  (DOT2) circle (2.75pt);
    \node at (0,-.2) {$\scs i$};
    \node at (.8,-.2) {$\scs \bar{i}$};
\end{tikzpicture}}
}
&\text{if } i=j\\[1em]
\hackcenter{\begin{tikzpicture}[scale=0.8]
    \draw[thick] (0,0) -- (0,1.5);
    \draw[densely dashed,thick] (.75,0) -- (.75,1.5);
    \node at (0,-.2) {$\scs i$};
    \node at (.8,-.2) {$\scs \bar{j}$};
    \end{tikzpicture}}
&\text{otherwise}
\end{array}
\right.
\end{equation}

\[
\hackcenter{\begin{tikzpicture}[scale=0.8]
    \draw[densely dashed,thick] (0,0) .. controls ++(0,.5) and ++(0,-.5) .. (.75,.75);
    \draw[densely dashed,thick] (.75,0) .. controls ++(0,.5) and ++(0,-.5) .. (0,.75);
    \draw[densely dashed,thick] (0,.75) .. controls ++(0,.5) and ++(0,-.5) .. (.75,1.5);
    \draw[densely dashed,thick] (.75,.75) .. controls ++(0,.5) and ++(0,-.5) .. (0,1.5);
    \node at (0,-.2) {$\scs \bar{i}$};
    \node at (.8,-.2) {$\scs \bar{j}$};
\end{tikzpicture}}
 \;\; = \;\;
\left\{
\begin{array}{ll}
0&\text{if } i=j\\[1em]
\hackcenter{\begin{tikzpicture}[scale=0.8]
    \draw[densely dashed,thick] (0,0) -- (0,1.5);
    \draw[densely dashed,thick] (.75,0) -- (.75,1.5);
    \node at (0,-.2) {$\scs \bar{i}$};
    \node at (.8,-.2) {$\scs \bar{j}$};
    \end{tikzpicture}}
&\text{otherwise}
\end{array}
\right.
\]

\noindent
$\bullet$
The relations of \eqref{RIII} in $R_n(\overline{Q}_{\tilde{C}})$:

Unless $\tilde{i}=\tilde{k}$ and $\tilde{i}\cdot\tilde{j}<0$, the following relation holds.
\begin{equation}\label{R3-gene}
\hackcenter{\begin{tikzpicture}[scale=0.8]
    \draw[thick] (0,0) .. controls ++(0,1) and ++(0,-1) .. (1.2,2);
    \draw[thick] (.6,0) .. controls ++(0,.5) and ++(0,-.5) .. (0,1.0);
    \draw[thick] (0,1.0) .. controls ++(0,.5) and ++(0,-.5) .. (0.6,2);
    \draw[thick] (1.2,0) .. controls ++(0,1) and ++(0,-1) .. (0,2);
    \node at (1.2,-.2) {$\scs \tilde{k}$};
    \node at (.6,-.2) {$\scs \tilde{j}$};
    \node at (0,-.2) {$\scs \tilde{i}$};
\end{tikzpicture}}
\;\; - \;\;
\hackcenter{\begin{tikzpicture}[scale=0.8]
    \draw[thick] (0,0) .. controls ++(0,1) and ++(0,-1) .. (1.2,2);
    \draw[thick] (.6,0) .. controls ++(0,.5) and ++(0,-.5) .. (1.2,1.0);
    \draw[thick] (1.2,1.0) .. controls ++(0,.5) and ++(0,-.5) .. (0.6,2.0);
    \draw[thick] (1.2,0) .. controls ++(0,1) and ++(0,-1) .. (0,2.0);
    \node at (1.2,-.2) {$\scs \tilde{k}$};
    \node at (.6,-.2) {$\scs \tilde{j}$};
    \node at (0,-.2) {$\scs \tilde{i}$};
\end{tikzpicture}}
\;\; = \;\;
0,
\end{equation}
where the type of lines in the above diagrams is also omitted.

Otherwise, we have
\begin{equation}\label{RIII-diag}
\hackcenter{\begin{tikzpicture}[scale=0.8]
    \draw[thick] (0,0) .. controls ++(0,1) and ++(0,-1) .. (1.2,2);
    \draw[thick] (.6,0) .. controls ++(0,.5) and ++(0,-.5) .. (0,1.0);
    \draw[thick] (0,1.0) .. controls ++(0,.5) and ++(0,-.5) .. (0.6,2);
    \draw[thick] (1.2,0) .. controls ++(0,1) and ++(0,-1) .. (0,2);
    \node at (1.2,-.2) {$\scs i$};
    \node at (.6,-.2) {$\scs j$};
    \node at (0,-.2) {$\scs i$};
\end{tikzpicture}}
\;\; - \;\;
\hackcenter{\begin{tikzpicture}[scale=0.8]
    \draw[thick] (0,0) .. controls ++(0,1) and ++(0,-1) .. (1.2,2);
    \draw[thick] (.6,0) .. controls ++(0,.5) and ++(0,-.5) .. (1.2,1.0);
    \draw[thick] (1.2,1.0) .. controls ++(0,.5) and ++(0,-.5) .. (0.6,2.0);
    \draw[thick] (1.2,0) .. controls ++(0,1) and ++(0,-1) .. (0,2.0);
    \node at (1.2,-.2) {$\scs i$};
    \node at (.6,-.2) {$\scs j$};
    \node at (0,-.2) {$\scs i$};
\end{tikzpicture}}
\;\; = \;\;
t(i,j) \sum_{a+b=-c(i,j)-1}
\hackcenter{\begin{tikzpicture}[scale=0.8]
    \draw[thick] (0,0) -- (0,2)  node[pos=.5, shape=coordinate](DOT){};
    \draw[thick] (.6,0) --  (.6,2) node[pos=.5, shape=coordinate](DOT2){};
    \draw[thick] (1.2,0) -- (1.2,2)  node[pos=.5, shape=coordinate](DOT1){};
    \filldraw  (DOT) circle (2.5pt);
    \filldraw  (DOT1) circle (2.5pt);
 \node at (-.4,1) {$\scs a$};
    \node at (1.6,1) {$\scs b$};
    \node at (1.2,-.2) {$\scs i$};
    \node at (.6,-.2) {$\scs j$};
    \node at (0,-.2) {$\scs i$};
\end{tikzpicture}}
\end{equation}

\[
\hackcenter{\begin{tikzpicture}[scale=0.8]
    \draw[thick] (0,0) .. controls ++(0,1) and ++(0,-1) .. (1.2,2);
    \draw[densely dashed,thick] (.6,0) .. controls ++(0,.5) and ++(0,-.5) .. (0,1.0);
    \draw[densely dashed,thick] (0,1.0) .. controls ++(0,.5) and ++(0,-.5) .. (0.6,2);
    \draw[thick] (1.2,0) .. controls ++(0,1) and ++(0,-1) .. (0,2);
    \node at (1.2,-.2) {$\scs i$};
    \node at (.6,-.2) {$\scs \bar{i}$};
    \node at (0,-.2) {$\scs i$};
\end{tikzpicture}}
\;\; - \;\;
\hackcenter{\begin{tikzpicture}[scale=0.8]
    \draw[thick] (0,0) .. controls ++(0,1) and ++(0,-1) .. (1.2,2);
    \draw[densely dashed,thick] (.6,0) .. controls ++(0,.5) and ++(0,-.5) .. (1.2,1.0);
    \draw[densely dashed,thick] (1.2,1.0) .. controls ++(0,.5) and ++(0,-.5) .. (0.6,2.0);
    \draw[thick] (1.2,0) .. controls ++(0,1) and ++(0,-1) .. (0,2.0);
    \node at (1.2,-.2) {$\scs i$};
    \node at (.6,-.2) {$\scs \bar{i}$};
    \node at (0,-.2) {$\scs i$};
\end{tikzpicture}}
\;\; = \;\;
\hackcenter{\begin{tikzpicture}[scale=0.8]
    \draw[thick] (0,0) -- (0,2)  node[pos=.5, shape=coordinate](DOT){};
    \draw[densely dashed,thick] (.6,0) --  (.6,2) node[pos=.5, shape=coordinate](DOT2){};
    \draw[thick] (1.2,0) -- (1.2,2)  node[pos=.5, shape=coordinate](DOT1){};
    \node at (1.2,-.2) {$\scs i$};
    \node at (.6,-.2) {$\scs \bar{i}$};
    \node at (0,-.2) {$\scs i$};
\end{tikzpicture}}
\qquad\qquad
\hackcenter{\begin{tikzpicture}[scale=0.8]
    \draw[densely dashed,thick] (0,0) .. controls ++(0,1) and ++(0,-1) .. (1.2,2);
    \draw[thick] (.6,0) .. controls ++(0,.5) and ++(0,-.5) .. (0,1.0);
    \draw[thick] (0,1.0) .. controls ++(0,.5) and ++(0,-.5) .. (0.6,2);
    \draw[densely dashed,thick] (1.2,0) .. controls ++(0,1) and ++(0,-1) .. (0,2);
    \node at (1.2,-.2) {$\scs \bar{i}$};
    \node at (.6,-.2) {$\scs i$};
    \node at (0,-.2) {$\scs \bar{i}$};
\end{tikzpicture}}
\;\; - \;\;
\hackcenter{\begin{tikzpicture}[scale=0.8]
    \draw[densely dashed,thick] (0,0) .. controls ++(0,1) and ++(0,-1) .. (1.2,2);
    \draw[thick] (.6,0) .. controls ++(0,.5) and ++(0,-.5) .. (1.2,1.0);
    \draw[thick] (1.2,1.0) .. controls ++(0,.5) and ++(0,-.5) .. (0.6,2.0);
    \draw[densely dashed,thick] (1.2,0) .. controls ++(0,1) and ++(0,-1) .. (0,2.0);
    \node at (1.2,-.2) {$\scs \bar{i}$};
    \node at (.6,-.2) {$\scs i$};
    \node at (0,-.2) {$\scs \bar{i}$};
\end{tikzpicture}}
\;\; = \;\;
-
\hackcenter{\begin{tikzpicture}[scale=0.8]
    \draw[densely dashed,thick] (0,0) -- (0,2)  node[pos=.5, shape=coordinate](DOT){};
    \draw[thick] (.6,0) --  (.6,2) node[pos=.5, shape=coordinate](DOT2){};
    \draw[densely dashed,thick] (1.2,0) -- (1.2,2)  node[pos=.5, shape=coordinate](DOT1){};
    \node at (1.2,-.2) {$\scs \bar{i}$};
    \node at (.6,-.2) {$\scs i$};
    \node at (0,-.2) {$\scs \bar{i}$};
\end{tikzpicture}}.
\]

\section{Khovanov-Lauda-Rouquier subalgebra $S_{C}(\lambda,\nu)$}
We define algebra $S_{C}(\lambda,\nu)$ associated with $\lambda \in (\Z_{\geq 0}[\bar{I}])^m$ and $\nu\in I^{\ell}$ as a subalgebra of the Khovanov-Lauda-Rouquier algebra of extended Cartan datum $(\tilde{I},\cdot)$. 
\subsection{Thick dashed strand for $e(\mathbf{i})$}
Let $\lambda=(\lambda_1,...,\lambda_{m})$ be an element in $(\Z_{\geq 0}[\bar{I}])^m$, and let $\nu=(\nu_1,...,\nu_\ell)$ be an element in $I^{\ell}$.
We write $|\lambda|:= \sum_{k=1}^m\sum_{i\in I}\lambda_k^{(i)}$, where $\lambda_k=\sum_{i\in I}\lambda_k^{(i)} \bar{i}\in \Z_{\geq 0}[\bar{I}]$.

The following form of idempotent $e_{\lambda_k}$ is primitive since we have $\overline{i}\cdot\overline{j}=0$ for $\overline{i}\not=\overline{j}$
\begin{equation}
e_{\lambda_k}=
\hackcenter{\begin{tikzpicture}[scale=0.8]
    \draw[densely dashed, thick, double] (0,0) -- (0,2.15);
    \node at (0,-.25) {$\scs \lambda_k$};
\end{tikzpicture}}
\quad :=
\hackcenter{\begin{tikzpicture}[scale=0.8]
    \draw[densely dashed,thick] (0,0) -- (0,2.15);
    \draw[densely dashed,thick] (1.5,0) -- (1.5,2.15);
    \node at (0,-.25) {$\scs \overline{1}$};
    \node at (1.5,-.25) {$\scs \overline{1}$};
    \node at (0.75,1.85) {$\cdots$};
    \node at (0.75,0.25) {$\cdots$};
    \fill[white] (-0.1,0.5) rectangle (1.6,1.5);
    \draw[thick] (-0.1,0.5) rectangle (1.6,1.5);
    \node at (.75,1) {\Large $e_{\lambda_k^{(1)}}$};
    \draw[densely dashed,thick] (2,0) -- (2,2.15);
    \draw[densely dashed,thick] (3.5,0) -- (3.5,2.15);
    \node at (2,-.25) {$\scs \overline{2}$};
    \node at (3.5,-.25) {$\scs \overline{2}$};
    \node at (2.75,1.85) {$\cdots$};
    \node at (2.75,0.25) {$\cdots$};
    \fill[white] (1.9,0.5) rectangle (3.6,1.5);
    \draw[thick] (1.9,0.5) rectangle (3.6,1.5);
    \node at (2.75,1) {\Large $e_{\lambda_k^{(2)}}$};
    \draw[densely dashed,thick] (4.5,0) -- (4.5,2.15);
    \draw[densely dashed,thick] (6,0) -- (6,2.15);
    \node at (4.5,-.25) {$\scs \overline{N}$};
    \node at (6,-.25) {$\scs \overline{N}$};
    \node at (5.25,1.85) {$\cdots$};
    \node at (5.25,0.25) {$\cdots$};
    \fill[white] (4.4,0.5) rectangle (6.1,1.5);
    \draw[thick] (4.4,0.5) rectangle (6.1,1.5);
    \node at (5.25,1) {\Large $e_{\lambda_k^{(N)}}$};
    \node at (4,1) {$\cdots$};
\end{tikzpicture}}
\end{equation}
where the box diagrams are introduced in Section \ref{typeA} (See Eq. \eqref{idempotent}).

We define $\mathrm{Seq}(\lambda,\nu)$ by the set of all sequences $\mathbf{i}=(\mathbf{i}_1,...,\mathbf{i}_{m+\ell})$ in which 
\begin{itemize}
\item $n$ entries are elements in $\nu$ appearing exactly once and 
\item $m$ entries are elements in $\lambda$ appearing exactly once and in the order of the sequence $\lambda$.
\end{itemize}

The idempotent $e(\mathbf{i})$ for $\mathbf{i}\in\mathrm{Seq}(\lambda,\nu)$ is represented by the diagram consisting of solid and thick dashed vertical strands whose $j$-th strand is the solid strand labeled with $\mathbf{i}_j\in I$ or the thick dashed strand labeled with $\mathbf{i}_j\in \Z_{\geq 0}[\bar{I}]$.

For instance, when $\lambda=(\lambda_1,\lambda_2)$, $\nu=(\nu_1,\nu_2,\nu_3)$ and $\mathbf{i}=(\nu_3,\lambda_1,\lambda_2,\nu_1,\nu_2)\in \mathrm{Seq}(\lambda,\nu)$, the element $e(\mathbf{i})$ is an idempotent of Khovanov-Lauda-Rouquier algebra $R_{|\lambda|+3}(\overline{Q}_{\tilde{C}})$
\begin{equation}
e(\mathbf{i})=
\hackcenter{\begin{tikzpicture}[scale=0.8]
    \draw[thick] (0,0) -- (0,2.15);
    \node at (0,-.25) {$\scs \nu_3$};
    \draw[densely dashed, thick, double] (1,0) -- (1,2.15);
    \node at (1,-.25) {$\scs \lambda_1$};
    \draw[densely dashed, thick, double] (2,0) -- (2,2.15);
    \node at (2,-.25) {$\scs \lambda_2$};
    \draw[thick] (3,0) -- (3,2.15);
    \node at (3,-.25) {$\scs \nu_1$};
    \draw[thick] (4,0) -- (4,2.15);
    \node at (4,-.25) {$\scs \nu_2$};
\end{tikzpicture}}.
\end{equation}

\subsection{Dots on solid strands}
For an idempotent $e(\mathbf{i})\in \mathrm{Seq}(\lambda, \nu)$, we define $y_j$ ($1\leq j\leq m+\ell$) by 
\begin{equation}
y_je(\mathbf{i})=\left\{
\begin{array}{ll}
x_{\alpha}e(\mathbf{i})&\text{if }\mathbf{i}_j\in I,\\
0&\text{if }\mathbf{i}_j\in  \Z_{\geq 0}[\overline{I}],
\end{array}
\right.
\end{equation}
where $\alpha$ is the position number of $\mathbf{i}_j$ counting from the left when we regard $e(\mathbf{i})$ as the idempotent in $R_{|\lambda|+\ell}(\overline{Q}_{\tilde{C}})$.

For instance, when $\lambda=(\lambda_1,\lambda_2)$, $\nu=(\nu_1,\nu_2,\nu_3)$ and $\mathbf{i}=(\nu_3,\lambda_1,\lambda_2,\nu_1,\nu_2)\in \mathrm{Seq}(\lambda,\nu)$, the element $y_j$ ($j=1,2,3,4,5$) is the following generator of the Khovanov-Lauda-Rouquier algebra $R_{|\lambda|+3}(\overline{Q}_{\tilde{C}})$
\begin{equation}
\nonumber
y_1 e(\mathbf{i})=x_1 e(\mathbf{i}),\quad
y_2 e(\mathbf{i})=0,\quad
y_3 e(\mathbf{i})=0,\quad
y_4 e(\mathbf{i})=x_{|\lambda+_1|+|\lambda_2|+2} e(\mathbf{i}),\quad
y_5 e(\mathbf{i})=x_{|\lambda+_1|+|\lambda_2|+3} e(\mathbf{i}).
\end{equation}
The diagrammatic description of these generators is as follows.
\begin{equation}
\nonumber
y_1e(\mathbf{i})=
\hackcenter{\begin{tikzpicture}[scale=0.8]
    \draw[thick] (0,0) -- (0,2.15) node[pos=.5, shape=coordinate](DOT){};
    \node at (0,-.25) {$\scs \nu_3$};
    \draw[densely dashed, thick, double] (1,0) -- (1,2.15);
    \node at (1,-.25) {$\scs \lambda_1$};
    \draw[densely dashed, thick, double] (2,0) -- (2,2.15);
    \node at (2,-.25) {$\scs \lambda_2$};
    \draw[thick] (3,0) -- (3,2.15);
    \node at (3,-.25) {$\scs \nu_1$};
    \draw[thick] (4,0) -- (4,2.15);
    \node at (4,-.25) {$\scs \nu_2$};
   \filldraw  (DOT) circle (2.5pt);
\end{tikzpicture}}\quad
y_4e(\mathbf{i})=
\hackcenter{\begin{tikzpicture}[scale=0.8]
    \draw[thick] (0,0) -- (0,2.15);
    \node at (0,-.25) {$\scs \nu_3$};
    \draw[densely dashed, thick, double] (1,0) -- (1,2.15);
    \node at (1,-.25) {$\scs \lambda_1$};
    \draw[densely dashed, thick, double] (2,0) -- (2,2.15);
    \node at (2,-.25) {$\scs \lambda_2$};
    \draw[thick] (3,0) -- (3,2.15) node[pos=.5, shape=coordinate](DOT){};
    \node at (3,-.25) {$\scs \nu_1$};
    \draw[thick] (4,0) -- (4,2.15);
    \node at (4,-.25) {$\scs \nu_2$};
   \filldraw  (DOT) circle (2.5pt);
\end{tikzpicture}}\quad
y_5e(\mathbf{i})=
\hackcenter{\begin{tikzpicture}[scale=0.8]
    \draw[thick] (0,0) -- (0,2.15);
    \node at (0,-.25) {$\scs \nu_3$};
    \draw[densely dashed, thick, double] (1,0) -- (1,2.15);
    \node at (1,-.25) {$\scs \lambda_1$};
    \draw[densely dashed, thick, double] (2,0) -- (2,2.15);
    \node at (2,-.25) {$\scs \lambda_2$};
    \draw[thick] (3,0) -- (3,2.15);
    \node at (3,-.25) {$\scs \nu_1$};
    \draw[thick] (4,0) -- (4,2.15) node[pos=.5, shape=coordinate](DOT){};
    \node at (4,-.25) {$\scs \nu_2$};
   \filldraw  (DOT) circle (2.5pt);
\end{tikzpicture}}
\end{equation}
Remark that we do not have the dot for $y_je(\mathbf{i})$ if $\mathbf{i}_j\in  \Z_{\geq 0}[\overline{I}]$ since $y_je(\mathbf{i})=0$.

\subsection{Dots on thick dashed strand}
Let $\mathfrak{S}_{\lambda_k}$ ($\lambda_k=\sum_{i\in I}\lambda_k^{(i)} \bar{i}\in \Z_{\geq 0}[\overline{I}]$) be the direct product of symmetric groups $\mathfrak{S}_{\lambda_k^{(1)}}\times \mathfrak{S}_{\lambda_k^{(2)}}\times \cdots \times \mathfrak{S}_{\lambda_k^{(N)}}$.
The group $\mathfrak{S}_{\lambda_k}$ is a subgroup of the symmetric group $\mathfrak{S}_{\sum_{i\in I} \lambda_{k}^{(i)}}$. Therefore, we naturally have the action of $\mathfrak{S}_{\lambda_k}$ on the polynomial ring
$$
\Bbbk[X_{\lambda_k}]=\Bbbk[x_\ell|1\leq \ell\leq \sum_{i\in I} \lambda_{k}^{(i)}].
$$
and we have the invariant ring $\Bbbk[X_{\lambda_k}]^{\mathfrak{S}_{\lambda_k}}$. 
This ring has a generating set composed of the elementary symmetric functions $E_{\lambda_k}=\{E_{k,j}^{(i)} | i\in I, 1\leq j\leq \lambda_k^{(i)}\}$, where $E_{k,j}^{(i)}$ is the $j$-th symmetric function in the $i$-th block $\Bbbk[x_{\ell} | \sum_{a=1}^{i-1}\lambda_k^{(a)}+1\leq \ell\leq \sum_{a=1}^{i}\lambda_k^{(a)}]$.
We simply write
$$
\Bbbk[E_{\lambda_k}]=\Bbbk[X_{\lambda_k}]^{\mathfrak{S}_{\lambda_k}}.
$$

The element $f\in \Bbbk[E_{\lambda_k}]$ satisfies the commutativity condition $f\cdot e_{\lambda_k}=e_{\lambda_k}\cdot f$.  We represent the diagrammatic description of $f$ as the dot labeled with $f$ on the thick dashed strand of $e_{\lambda_k}$:
\begin{equation}
\hackcenter{\begin{tikzpicture}[scale=0.8]
    \draw[densely dashed, thick, double] (0,0) -- (0,2.15)  node[pos=.5, shape=coordinate](DOT){};
    \node at (0,-.25) {$\scs \lambda_k$};
    \filldraw  (DOT) circle (3pt);
    \node at (.25,1) {$\scs f$};
\end{tikzpicture}}
\end{equation}

For the diagram description of generators in $\Bbbk[E_{\lambda_k}]$, we omit $k$ in the letter $E_{k,j}^{(i)}$ as below since the strand has the information of $k$:
\begin{equation}
\hackcenter{\begin{tikzpicture}[scale=0.8]
    \draw[densely dashed, thick, double] (0,0) -- (0,2.15)  node[pos=.5, shape=coordinate](DOT){};
    \node at (0,-.25) {$\scs \lambda_k$};
    \filldraw  (DOT) circle (3pt);
    \node at (.5,1) {$\scs E_j^{(i)}$};
\end{tikzpicture}}
\end{equation}

For instance, when $\lambda=(\lambda_1,\lambda_2)$, $\nu=(\nu_1,\nu_2,\nu_3)$ and $\mathbf{i}=(\nu_3,\lambda_1,\lambda_2,\nu_1,\nu_2)\in \mathrm{Seq}(\lambda,\nu)$, we have two generating sets $E_{\lambda_1}$ and $E_{\lambda_2}$.
The elementary symmetric functions $E_{1,j}^{(i)}$ and $E_{2,j}^{(i)}$ are described diagrammatically as follows.

\begin{equation}
\nonumber
E_{1,j}^{(i)} e(\mathbf{i})=
\hackcenter{\begin{tikzpicture}[scale=0.8]
    \draw[thick] (0,0) -- (0,2.15);
    \node at (0,-.25) {$\scs \nu_3$};
    \draw[densely dashed, thick, double] (1,0) -- (1,2.15) node[pos=.5, shape=coordinate](DOT){};
    \node at (1,-.25) {$\scs \lambda_1$};
    \draw[densely dashed, thick, double] (2,0) -- (2,2.15);
    \node at (2,-.25) {$\scs \lambda_2$};
    \draw[thick] (3,0) -- (3,2.15);
    \node at (3,-.25) {$\scs \nu_1$};
    \draw[thick] (4,0) -- (4,2.15);
    \node at (4,-.25) {$\scs \nu_2$};
   \filldraw  (DOT) circle (3pt);
    \node at (1.5,1) {$\scs E_j^{(i)}$};
\end{tikzpicture}}\qquad
E_{2,j}^{(i)} e(\mathbf{i})=
\hackcenter{\begin{tikzpicture}[scale=0.8]
    \draw[thick] (0,0) -- (0,2.15);
    \node at (0,-.25) {$\scs \nu_3$};
    \draw[densely dashed, thick, double] (1,0) -- (1,2.15);
    \node at (1,-.25) {$\scs \lambda_1$};
    \draw[densely dashed, thick, double] (2,0) -- (2,2.15) node[pos=.5, shape=coordinate](DOT){};
    \node at (2,-.25) {$\scs \lambda_2$};
    \draw[thick] (3,0) -- (3,2.15);
    \node at (3,-.25) {$\scs \nu_1$};
    \draw[thick] (4,0) -- (4,2.15);
    \node at (4,-.25) {$\scs \nu_2$};
   \filldraw  (DOT) circle (3pt);
    \node at (2.5,1) {$\scs E_j^{(i)}$};
\end{tikzpicture}}
\end{equation}

\subsection{Thick dashed and solid crossings}
For $\mathbf{i}=(\mathbf{i}_1,\mathbf{i}_2)$, we define the element $\Psi e(\mathbf{i})$ by
\begin{equation}
\Psi e(\mathbf{i})=
\left\{\begin{array}{ll}
\psi_1 e(\mathbf{i}) &\text{if }\mathbf{i}_1,\mathbf{i}_2\in I,\\
\psi_{1}\psi_{2}\cdots \psi_{|\mathbf{i}_1|-1}\psi_{|\mathbf{i}_1|} e(\mathbf{i}) &\text{if }\mathbf{i}_1\in \Z_{\geq 0}[\overline{I}],\mathbf{i}_2\in I,\\
\psi_{|\mathbf{i}_2|}\psi_{|\mathbf{i}_2|-1}\cdots \psi_{2}\psi_{1}  e(\mathbf{i}) &\text{if }\mathbf{i}_1\in I,\mathbf{i}_2\in \Z_{\geq 0}[\overline{I}],\\
0&\text{if }\mathbf{i}_1,\mathbf{i}_2\in \Z_{\geq 0}[\overline{I}],
\end{array}\right.
\end{equation}
where $\psi_j$ is the generator of Khovanov-Lauda-Rouquier algebra $R(\overline{Q}_{\tilde{C}})$.

The diagrammatic description of this element $\Psi e(\mathbf{i})$ is defined as follows:\\
\noindent
$\bullet$ The case of $\mathbf{i}_1,\mathbf{i}_2\in I$:
\begin{equation}
\Psi e(\mathbf{i})=
\hackcenter{\begin{tikzpicture}[scale=0.6]
    \draw[thick] (0,0) .. controls (0,.75) and (1.5,.75) .. (1.5,1.5);
    \draw[thick] (1.5,0) .. controls (1.5,.75) and (0,.75) .. (0,1.5);
    \node at (0,-.25) {$\scs  \mathbf{i}_1$};
    \node at (1.5,-.25) {$\scs  \mathbf{i}_2$};
\end{tikzpicture}} .
\end{equation}

\noindent
$\bullet$ The case of $\mathbf{i}_1\in \Z_{\geq 0}[\overline{I}],\mathbf{i}_2\in I$:
\begin{equation}
\Psi e(\mathbf{i})=
\hackcenter{\begin{tikzpicture}[scale=0.6]
    \draw[densely dashed, thick, double] (0,0) .. controls (0,.75) and (1.5,.75) .. (1.5,1.5);
    \draw[thick] (1.5,0) .. controls (1.5,.75) and (0,.75) .. (0,1.5);
    \node at (0,-.25) {$\scs  \mathbf{i}_1$};
    \node at (1.5,-.25) {$\scs  \mathbf{i}_2$};
\end{tikzpicture}} =
\hackcenter{\begin{tikzpicture}[scale=0.8]
    \draw[densely dashed,thick] (0,0) -- (0,2.15) .. controls (0,2.65) and (.5,2.65) .. (.5,3.15);
    \draw[densely dashed,thick] (1.5,0) -- (1.5,2.15) .. controls (1.5,2.65) and (2,2.65) .. (2,3.15);
    \node at (0,-.25) {$\scs \overline{1}$};
    \node at (1.5,-.25) {$\scs \overline{1}$};
    \node at (0.75,1.85) {$\cdots$};
    \node at (0.75,0.25) {$\cdots$};
    \fill[white] (-0.1,0.5) rectangle (1.6,1.5);
    \draw[thick] (-0.1,0.5) rectangle (1.6,1.5);
    \node at (.75,1) {\Large $e_{\mathbf{i}_1^{(1)}}$};
    \draw[densely dashed,thick] (2,0) -- (2,2.15) .. controls (2,2.65) and (2.5,2.65) .. (2.5,3.15);
    \draw[densely dashed,thick] (3.5,0) -- (3.5,2.15) .. controls (3.5,2.65) and (4,2.65) .. (4,3.15);
    \node at (2,-.25) {$\scs \overline{2}$};
    \node at (3.5,-.25) {$\scs \overline{2}$};
    \node at (2.75,1.85) {$\cdots$};
    \node at (2.75,0.25) {$\cdots$};
    \fill[white] (1.9,0.5) rectangle (3.6,1.5);
    \draw[thick] (1.9,0.5) rectangle (3.6,1.5);
    \node at (2.75,1) {\Large $e_{\mathbf{i}_1^{(2)}}$};
    \draw[densely dashed,thick] (4.5,0) -- (4.5,2.15) .. controls (4.5,2.65) and (5,2.65) .. (5,3.15);
    \draw[densely dashed,thick] (6,0) -- (6,2.15) .. controls (6,2.65) and (6.5,2.65) .. (6.5,3.15);
    \node at (4.5,-.25) {$\scs \overline{N}$};
    \node at (6,-.25) {$\scs \overline{N}$};
    \node at (5.25,1.85) {$\cdots$};
    \node at (5.25,0.25) {$\cdots$};
    \fill[white] (4.4,0.5) rectangle (6.1,1.5);
    \draw[thick] (4.4,0.5) rectangle (6.1,1.5);
    \node at (5.25,1) {\Large $e_{\mathbf{i}_1^{(N)}}$};
    \node at (4,1) {$\cdots$};
     \draw[thick] (6.5,0) -- (6.5,2.15);
    \node at (6.5,-.25) {$\scs \mathbf{i}_2$};
    \draw[thick] (6.5,2.15) .. controls (6.5,3) and (0,2.45) .. (0,3.15);
\end{tikzpicture}}.
\end{equation}

\noindent
$\bullet$ The case of $\mathbf{i}_1\in I,\mathbf{i}_2\in \Z_{\geq 0}[\overline{I}]$:
\begin{equation}
\Psi e(\mathbf{i})=
\hackcenter{\begin{tikzpicture}[scale=0.6]
    \draw[thick] (0,0) .. controls (0,.75) and (1.5,.75) .. (1.5,1.5);
    \draw[densely dashed, thick, double] (1.5,0) .. controls (1.5,.75) and (0,.75) .. (0,1.5);
    \node at (0,-.25) {$\scs  \mathbf{i}_1$};
    \node at (1.5,-.25) {$\scs  \mathbf{i}_2$};
\end{tikzpicture}} =
\hackcenter{\begin{tikzpicture}[scale=0.8]
    \draw[densely dashed,thick] (0,0) -- (0,2.15) .. controls (0,2.65) and (-.5,2.65) .. (-.5,3.15);
    \draw[densely dashed,thick] (1.5,0) -- (1.5,2.15) .. controls (1.5,2.65) and (1,2.65) .. (1,3.15);
    \node at (0,-.25) {$\scs \overline{1}$};
    \node at (1.5,-.25) {$\scs \overline{1}$};
    \node at (0.75,1.85) {$\cdots$};
    \node at (0.75,0.25) {$\cdots$};
    \fill[white] (-0.1,0.5) rectangle (1.6,1.5);
    \draw[thick] (-0.1,0.5) rectangle (1.6,1.5);
    \node at (.75,1) {\Large $e_{\mathbf{i}_2^{(1)}}$};
    \draw[densely dashed,thick] (2,0) -- (2,2.15) .. controls (2,2.65) and (1.5,2.65) .. (1.5,3.15);
    \draw[densely dashed,thick] (3.5,0) -- (3.5,2.15) .. controls (3.5,2.65) and (3,2.65) .. (3,3.15);
    \node at (2,-.25) {$\scs \overline{2}$};
    \node at (3.5,-.25) {$\scs \overline{2}$};
    \node at (2.75,1.85) {$\cdots$};
    \node at (2.75,0.25) {$\cdots$};
    \fill[white] (1.9,0.5) rectangle (3.6,1.5);
    \draw[thick] (1.9,0.5) rectangle (3.6,1.5);
    \node at (2.75,1) {\Large $e_{\mathbf{i}_2^{(2)}}$};
    \draw[densely dashed,thick] (4.5,0) -- (4.5,2.15) .. controls (4.5,2.65) and (4,2.65) .. (4,3.15);
    \draw[densely dashed,thick] (6,0) -- (6,2.15) .. controls (6,2.65) and (5.5,2.65) .. (5.5,3.15);
    \node at (4.5,-.25) {$\scs \overline{N}$};
    \node at (6,-.25) {$\scs \overline{N}$};
    \node at (5.25,1.85) {$\cdots$};
    \node at (5.25,0.25) {$\cdots$};
    \fill[white] (4.4,0.5) rectangle (6.1,1.5);
    \draw[thick] (4.4,0.5) rectangle (6.1,1.5);
    \node at (5.25,1) {\Large $e_{\mathbf{i}_2^{(N)}}$};
    \node at (4,1) {$\cdots$};
     \draw[thick] (-.5,0) -- (-.5,2.15);
    \node at (-.5,-.25) {$\scs \mathbf{i}_1$};
    \draw[thick] (-.5,2.15) .. controls (-.5,3) and (6,2.45) .. (6,3.15);
\end{tikzpicture}}.
\end{equation}

\noindent
$\bullet$ The case of $\mathbf{i}_1,\mathbf{i}_2\in \Z_{\geq 0}[\overline{I}]$: We do not use the thick-thick dashed crossing since we have $\Psi e(\mathbf{i})=0$ for $\mathbf{i}\in (\Z_{\geq 0}[\overline{I}])^2$.

For an idempotent $e(\mathbf{i})\in \mathrm{Seq}(\lambda, \nu)$, we naturally extend the element $\Psi$ to $\Psi_j$ ($1\leq j\leq m+\ell-1$). 

For instance, when $\lambda=(\lambda_1,\lambda_2)$, $\nu=(\nu_1,\nu_2,\nu_3)$ and $\mathbf{i}=(\nu_3,\lambda_1,\lambda_2,\nu_1,\nu_2)\in \mathrm{Seq}(\lambda,\nu)$, the element $\Psi_j$ ($j=1,2,3,4$) is described diagrammatically as follows.

\begin{equation}
\nonumber
\Psi_1 e(\mathbf{i})=
\hackcenter{\begin{tikzpicture}[scale=0.8]
    \draw[thick] (0,0) .. controls (0,.75) and (1,1.4) .. (1,2.15);
    \node at (0,-.25) {$\scs \nu_3$};
    \draw[densely dashed, thick, double] (1,0)  .. controls (1,.75) and (0,1.4) .. (0,2.15);
    \node at (1,-.25) {$\scs \lambda_1$};
    \draw[densely dashed, thick, double] (2,0) -- (2,2.15);
    \node at (2,-.25) {$\scs \lambda_2$};
    \draw[thick] (3,0) -- (3,2.15);
    \node at (3,-.25) {$\scs \nu_1$};
    \draw[thick] (4,0) -- (4,2.15);
    \node at (4,-.25) {$\scs \nu_2$};
\end{tikzpicture}}\quad
\Psi_3 e(\mathbf{i})=
\hackcenter{\begin{tikzpicture}[scale=0.8]
    \draw[thick] (0,0) -- (0,2.15);
    \node at (0,-.25) {$\scs \nu_3$};
    \draw[densely dashed, thick, double] (1,0) -- (1,2.15);
    \node at (1,-.25) {$\scs \lambda_1$};
    \draw[densely dashed, thick, double] (2,0)  .. controls (2,.75) and (3,1.4) .. (3,2.15);
    \node at (2,-.25) {$\scs \lambda_2$};
    \draw[thick] (3,0)  .. controls (3,.75) and (2,1.4) .. (2,2.15);
    \node at (3,-.25) {$\scs \nu_1$};
    \draw[thick] (4,0) -- (4,2.15);
    \node at (4,-.25) {$\scs \nu_2$};
\end{tikzpicture}}\quad
\Psi_4 e(\mathbf{i})=
\hackcenter{\begin{tikzpicture}[scale=0.8]
    \draw[thick] (0,0) -- (0,2.15);
    \node at (0,-.25) {$\scs \nu_3$};
    \draw[densely dashed, thick, double] (1,0) -- (1,2.15);
    \node at (1,-.25) {$\scs \lambda_1$};
    \draw[densely dashed, thick, double] (2,0) -- (2,2.15);
    \node at (2,-.25) {$\scs \lambda_2$};
    \draw[thick] (3,0)  .. controls (3,.75) and (4,1.4) .. (4,2.15);
    \node at (3,-.25) {$\scs \nu_1$};
    \draw[thick] (4,0)  .. controls (4,.75) and (3,1.4) .. (3,2.15);
    \node at (4,-.25) {$\scs \nu_2$};
\end{tikzpicture}}
\end{equation}

Remark that we do not have the diagrammatic description for $\Psi_2e(\mathbf{i})$ since $\mathbf{i}_2,\mathbf{i}_3\in  \Z_{\geq 0}[\overline{I}]$.

\begin{remark}
The element $\Psi e(\mathbf{i})$ ($\mathbf{i}_1,\mathbf{i}_2\in \Z_{\geq 0}[\overline{I}]$) is not considered to be the following element of $R_{|\mathbf{i}_1|+|\mathbf{i}_1|}(\overline{Q}_{\tilde{C}})$:

\begin{equation}
\nonumber
\Psi e(\mathbf{i})\not=
\hackcenter{\begin{tikzpicture}[scale=0.8]
    \draw[densely dashed,thick] (0,0) -- (0,2.15) .. controls (0,3.5) and (4.5,2.65) .. (4.5,3.5);
    \draw[densely dashed,thick] (1.5,0) -- (1.5,2.15) .. controls (1.5,3) and (6,2.65) .. (6,3.5);
    \node at (0,-.25) {$\scs \overline{1}$};
    \node at (1.5,-.25) {$\scs \overline{1}$};
    \node at (0.75,1.85) {$\cdots$};
    \node at (0.75,0.25) {$\cdots$};
    \fill[white] (-0.1,0.5) rectangle (1.6,1.5);
    \draw[thick] (-0.1,0.5) rectangle (1.6,1.5);
    \node at (.75,1) {\Large $e_{\mathbf{i}_1^{(1)}}$};
    \draw[densely dashed,thick] (2.5,0) -- (2.5,2.15) .. controls (2.5,2.65) and (7,2.65) .. (7,3.5);
    \draw[densely dashed,thick] (4,0) -- (4,2.15) .. controls (4,2.65) and (8.5,2.65) .. (8.5,3.5);
    \node at (2.5,-.25) {$\scs \overline{N}$};
    \node at (4,-.25) {$\scs \overline{N}$};
    \node at (3.25,1.85) {$\cdots$};
    \node at (3.25,0.25) {$\cdots$};
    \fill[white] (2.4,0.5) rectangle (4.1,1.5);
    \draw[thick] (2.4,0.5) rectangle (4.1,1.5);
    \node at (3.25,1) {\Large $e_{\mathbf{i}_1^{(N)}}$};
    \node at (2,1) {$\cdots$};
    \draw[densely dashed,thick] (4.5,0) -- (4.5,2.15) .. controls (4.5,2.65) and (0,2.65) .. (0,3.5);
    \draw[densely dashed,thick] (6,0) -- (6,2.15) .. controls (6,2.65) and (1.5,2.65) .. (1.5,3.5);
    \node at (4.5,-.25) {$\scs \overline{1}$};
    \node at (6,-.25) {$\scs \overline{1}$};
    \node at (5.25,1.85) {$\cdots$};
    \node at (5.25,0.25) {$\cdots$};
    \fill[white] (4.4,0.5) rectangle (6.1,1.5);
    \draw[thick] (4.4,0.5) rectangle (6.1,1.5);
    \node at (5.25,1) {\Large $e_{\mathbf{i}_2^{(1)}}$};
    \draw[densely dashed,thick] (7,0) -- (7,2.15) .. controls (7,3) and (2.5,2.65) .. (2.5,3.5);
    \draw[densely dashed,thick] (8.5,0) -- (8.5,2.15) .. controls (8.5,3.5) and (4,2.65) .. (4,3.5);
    \node at (7,-.25) {$\scs \overline{N}$};
    \node at (8.5,-.25) {$\scs \overline{N}$};
    \node at (7.75,1.85) {$\cdots$};
    \node at (7.75,0.25) {$\cdots$};
    \fill[white] (6.9,0.5) rectangle (8.6,1.5);
    \draw[thick] (6.9,0.5) rectangle (8.6,1.5);
    \node at (7.75,1) {\Large $e_{\mathbf{i}_2^{(N)}}$};
    \node at (6.5,1) {$\cdots$};
    \node at (0.75,3.4) {$\cdots$};
    \node at (3.25,3.4) {$\cdots$};
    \node at (5.25,3.4) {$\cdots$};
    \node at (7.75,3.4) {$\cdots$};
\end{tikzpicture}}.
\end{equation}
\end{remark}

\subsection{Khovanov-Lauda-Rouquier subalgebra $S_{C}(\lambda,\nu)$}

\begin{definition}
Let $\lambda=(\lambda_1,...,\lambda_{m})$ be an element in $(\Z_{\geq 0}[\bar{I}])^m$, and let $\nu=(\nu_1,...,\nu_\ell)$ be an element in $I^{\ell}$.
The Khovanov-Lauda-Rouquier subalgebra $S_{C}(\lambda,\nu)$ of $R_{|\lambda|+\ell}(\overline{Q}_{\tilde{C}})$ is generated by
\begin{itemize}
\item $e(\mathbf{i})$ ($\mathbf{i}\in \mathrm{Seq}(\lambda,\nu)$),
\item $y_j$ ($1\leq j\leq m+\ell$),
\item The elementary symmetric functions in the generating sets $E_{\lambda_k}$ ($1\leq k\leq m$),
\item $\Psi_j$ ($1\leq j\leq m+\ell-1$).
\end{itemize}
\end{definition}

The Khovanov-Lauda-Rouquier subalgebra $S_{C}(\lambda,\nu)$ has the following properties.
\begin{proposition}

\begin{align}
\label{blackdot}
\hackcenter{\begin{tikzpicture}[scale=0.8]
    \draw[thick] (0,0) .. controls (0,.75) and (.75,.75) .. (.75,1.5)
        node[pos=.25, shape=coordinate](DOT){};
    \draw[densely dashed, thick, double] (.75,0) .. controls (.75,.75) and (0,.75) .. (0,1.5);
    \filldraw  (DOT) circle (2.5pt);
    \node at (0.8,-.2) {$\scs \lambda_k$};
    \node at (0,-.2) {$\scs i$};
\end{tikzpicture}}
\quad =\quad
\hackcenter{\begin{tikzpicture}[scale=0.8]
    \draw[thick] (0,0) .. controls (0,.75) and (.75,.75) .. (.75,1.5)
        node[pos=.75, shape=coordinate](DOT){};
    \draw[densely dashed, thick, double] (.75,0) .. controls (.75,.75) and (0,.75) .. (0,1.5);
    \filldraw  (DOT) circle (2.5pt);
    \node at (0.8,-.2) {$\scs \lambda_k$};
    \node at (0,-.2) {$\scs i$};
\end{tikzpicture}}
\qquad \quad
\hackcenter{\begin{tikzpicture}[scale=0.8]
    \draw[densely dashed, thick, double] (0,0) .. controls (0,.75) and (.75,.75) .. (.75,1.5);
    \draw[thick] (.75,0) .. controls (.75,.75) and (0,.75) .. (0,1.5)
        node[pos=.75, shape=coordinate](DOT){};
    \filldraw  (DOT) circle (2.75pt);
    \node at (0,-.2) {$\scs \lambda_k$};
    \node at (.8,-.2) {$\scs i$};
\end{tikzpicture}}
\quad=\quad
\hackcenter{\begin{tikzpicture}[scale=0.8]
    \draw[densely dashed, thick, double] (0,0) .. controls (0,.75) and (.75,.75) .. (.75,1.5);
    \draw[thick] (.75,0) .. controls (.75,.75) and (0,.75) .. (0,1.5)
        node[pos=.25, shape=coordinate](DOT){};
      \filldraw  (DOT) circle (2.75pt);
    \node at (0,-.2) {$\scs \lambda_k$};
    \node at (.8,-.2) {$\scs i$};
\end{tikzpicture}}
\end{align}

\begin{equation}
\label{thickdot}
\hackcenter{\begin{tikzpicture}[scale=0.8]
    \draw[densely dashed, thick, double] (0,0) .. controls (0,.75) and (.75,.75) .. (.75,1.5)
        node[pos=.25, shape=coordinate](DOT){};
    \draw[thick] (.75,0) .. controls (.75,.75) and (0,.75) .. (0,1.5);
    \filldraw  (DOT) circle (2.5pt);
    \node at (-.5,.5) {$\scs E_d^{(j)}$};
    \node at (0,-.2) {$\scs \lambda_k$};
    \node at (0.8,-.2) {$\scs i$};
\end{tikzpicture}}
\quad =\quad
\hackcenter{\begin{tikzpicture}[scale=0.8]
    \draw[densely dashed, thick, double] (0,0) .. controls (0,.75) and (.75,.75) .. (.75,1.5)
        node[pos=.75, shape=coordinate](DOT){};
    \draw[thick] (.75,0) .. controls (.75,.75) and (0,.75) .. (0,1.5);
    \filldraw  (DOT) circle (2.5pt);
    \node at (1.25,1) {$\scs E_d^{(j)}$};
    \node at (0,-.2) {$\scs \lambda_k$};
    \node at (0.8,-.2) {$\scs i$};
\end{tikzpicture}}
\qquad \quad
\hackcenter{\begin{tikzpicture}[scale=0.8]
    \draw[thick] (0,0) .. controls (0,.75) and (.75,.75) .. (.75,1.5);
    \draw[densely dashed, thick, double] (.75,0) .. controls (.75,.75) and (0,.75) .. (0,1.5)
        node[pos=.75, shape=coordinate](DOT){};
    \filldraw  (DOT) circle (2.75pt);
    \node at (-.5,1) {$\scs E_d^{(j)}$};
    \node at (0.8,-.2) {$\scs \lambda_k$};
    \node at (0,-.2) {$\scs i$};
\end{tikzpicture}}
\quad=\quad
\hackcenter{\begin{tikzpicture}[scale=0.8]
    \draw[thick] (0,0) .. controls (0,.75) and (.75,.75) .. (.75,1.5);
    \draw[densely dashed, thick, double] (.75,0) .. controls (.75,.75) and (0,.75) .. (0,1.5)
        node[pos=.25, shape=coordinate](DOT){};
      \filldraw  (DOT) circle (2.75pt);
    \node at (1.25,.5) {$\scs E_d^{(j)}$};
    \node at (0.8,-.2) {$\scs \lambda_k$};
    \node at (0,-.2) {$\scs i$};
\end{tikzpicture}}
\end{equation}

\begin{equation}\label{thinthick2-rel}
\hackcenter{\begin{tikzpicture}[scale=0.8]
    \draw[thick] (0,0) .. controls ++(0,.5) and ++(0,-.5) .. (.75,1);
    \draw[densely dashed, thick, double] (.75,0) .. controls ++(0,.5) and ++(0,-.5) .. (0,1);
    \draw[densely dashed, thick, double] (0,1 ) .. controls ++(0,.5) and ++(0,-.5) .. (.75,2);
    \draw[thick] (.75,1) .. controls ++(0,.5) and ++(0,-.5) .. (0,2);
    \node at (0,-.25) {$\;$};
    \node at (0,2.25) {$\;$};
    \node at (.8,-.2) {$\scs \lambda_k$};
    \node at (0,-.2) {$\scs i$};
\end{tikzpicture}}
 \;\; = \;\;
\sum_{d_1+d_2=\lambda_k^{(i)}}(-1)^{d_2}
\hackcenter{\begin{tikzpicture}[scale=0.8]
    \draw[thick] (0,0) -- (0,1.5)  node[pos=.55, shape=coordinate](DOT){};
    \filldraw  (DOT) circle (2.5pt);
    \draw[densely dashed, thick, double] (.75,0) -- (.75,1.5) node[pos=.55, shape=coordinate](DOT2){};
    \filldraw  (DOT2) circle (2.75pt);
    \node at (-.3,.75) {$\scs d_1$};
    \node at (1.25,.75) {$\scs {E}_{d_2}^{(i)}$};
    \node at (.8,-.2) {$\scs \lambda_k$};
    \node at (0,-.2) {$\scs i$};
\end{tikzpicture}}
%
\qquad \quad
\hackcenter{\begin{tikzpicture}[scale=0.8]
    \draw[densely dashed, thick, double] (0,0) .. controls ++(0,.5) and ++(0,-.5) .. (.75,1);
    \draw[thick] (.75,0) .. controls ++(0,.5) and ++(0,-.5) .. (0,1);
    \draw[thick] (0,1 ) .. controls ++(0,.5) and ++(0,-.5) .. (.75,2);
    \draw[densely dashed, thick, double] (.75,1) .. controls ++(0,.5) and ++(0,-.5) .. (0,2);
    \node at (0,-.25) {$\;$};
    \node at (0,2.25) {$\;$};
    \node at (0,-.2) {$\scs \lambda_k$};
    \node at (.8,-.2) {$\scs i$};
\end{tikzpicture}}
 \;\; = \;\;
\sum_{d_1+d_2=\lambda_k^{(i)}}(-1)^{d_1}
\hackcenter{\begin{tikzpicture}[scale=0.8]
    \draw[thick] (.75,0) -- (.75,1.5)  node[pos=.55, shape=coordinate](DOT){};
    \filldraw  (DOT) circle (2.5pt);
    \draw[densely dashed, thick, double] (0,0) -- (0,1.5) node[pos=.55, shape=coordinate](DOT2){};
    \filldraw  (DOT2) circle (2.75pt);
    \node at (1.15,.75) {$\scs d_2$};
    \node at (-.5,.75) {$\scs {E}_{d_1}^{(i)}$};
    \node at (0,-.2) {$\scs \lambda_k$};
    \node at (.8,-.2) {$\scs i$};
\end{tikzpicture}}
\end{equation}

\begin{align}\label{r3}
\hackcenter{\begin{tikzpicture}[scale=0.8]
    \draw[densely dashed, thick, double] (0,0) .. controls ++(0,1) and ++(0,-1) .. (1.5,2);
    \draw[thick] (.75,0) .. controls ++(0,.5) and ++(0,-.5) .. (0,1.0);
    \draw[thick] (0,1.0) .. controls ++(0,.5) and ++(0,-.5) .. (0.75,2);
    \draw[thick] (1.5,0) .. controls ++(0,1) and ++(0,-1) .. (0,2);
    \node at (0,-.2) {$\scs \lambda_k$};
    \node at (0.75,-.2) {$\scs i$};
    \node at (1.5,-.2) {$\scs j$};
\end{tikzpicture}}
\;\; = \;\;
\hackcenter{\begin{tikzpicture}[scale=0.8]
    \draw[densely dashed, thick, double] (0,0) .. controls ++(0,1) and ++(0,-1) .. (1.5,2);
    \draw[thick] (.75,0) .. controls ++(0,.5) and ++(0,-.5) .. (1.5,1.0);
    \draw[thick] (1.5,1.0) .. controls ++(0,.5) and ++(0,-.5) .. (0.75,2.0);
    \draw[thick] (1.5,0) .. controls ++(0,1) and ++(0,-1) .. (0,2.0);
    \node at (0,-.2) {$\scs \lambda_k$};
    \node at (0.75,-.2) {$\scs i$};
    \node at (1.5,-.2) {$\scs j$};
\end{tikzpicture}}
\qquad \qquad
\hackcenter{\begin{tikzpicture}[scale=0.8]
    \draw[thick] (0,0) .. controls ++(0,1) and ++(0,-1) .. (1.5,2);
    \draw[thick] (.75,0) .. controls ++(0,.5) and ++(0,-.5) .. (0,1.0);
    \draw[thick] (0,1.0) .. controls ++(0,.5) and ++(0,-.5) .. (0.75,2);
    \draw[densely dashed, thick, double] (1.5,0) .. controls ++(0,1) and ++(0,-1) .. (0,2);
    \node at (1.5,-.2) {$\scs \lambda_k$};
    \node at (0.7,-.2) {$\scs i$};
    \node at (0,-.2) {$\scs j$};
\end{tikzpicture}}
\;\; = \;\;
\hackcenter{\begin{tikzpicture}[scale=0.8]
    \draw[thick] (0,0) .. controls ++(0,1) and ++(0,-1) .. (1.5,2);
    \draw[thick] (.75,0) .. controls ++(0,.5) and ++(0,-.5) .. (1.5,1.0);
    \draw[thick] (1.5,1.0) .. controls ++(0,.5) and ++(0,-.5) .. (0.75,2.0);
    \draw[densely dashed, thick, double] (1.5,0) .. controls ++(0,1) and ++(0,-1) .. (0,2.0);
    \node at (1.2,-.2) {$\scs \lambda_k$};
    \node at (0.6,-.2) {$\scs i$};
    \node at (0,-.2) {$\scs j$};
\end{tikzpicture}}
\end{align}

\begin{equation}\label{r3-2}
\hackcenter{\begin{tikzpicture}[scale=0.8]
    \draw[thick] (0,0) .. controls ++(0,1) and ++(0,-1) .. (1.5,2);
    \draw[densely dashed, thick, double] (.75,0) .. controls ++(0,.5) and ++(0,-.5) .. (0,1.0);
    \draw[densely dashed, thick, double] (0,1.0) .. controls ++(0,.5) and ++(0,-.5) .. (0.75,2);
    \draw[thick] (1.5,0) .. controls ++(0,1) and ++(0,-1) .. (0,2);
    \node at (0.75,-.2) {$\scs \lambda_k$};
    \node at (0,-.2) {$\scs i$};
    \node at (1.5,-.2) {$\scs j$};
\end{tikzpicture}}
\;\; - \;\;
\hackcenter{\begin{tikzpicture}[scale=0.8]
    \draw[thick] (0,0) .. controls ++(0,1) and ++(0,-1) .. (1.5,2);
    \draw[densely dashed, thick, double] (.75,0) .. controls ++(0,.5) and ++(0,-.5) .. (1.5,1.0);
    \draw[densely dashed, thick, double] (1.5,1.0) .. controls ++(0,.5) and ++(0,-.5) .. (0.75,2.0);
    \draw[thick] (1.5,0) .. controls ++(0,1) and ++(0,-1) .. (0,2.0);
    \node at (0.75,-.2) {$\scs \lambda_k$};
    \node at (0,-.2) {$\scs i$};
    \node at (1.5,-.2) {$\scs j$};
\end{tikzpicture}}
\;\; = \;\; \delta_{i,j}\sum_{a+b+c=\lambda_k^{(i)}-1}(-1)^c
\hackcenter{\begin{tikzpicture}[scale=0.8]
    \draw[thick] (0,0) -- (0,2)  node[pos=.5, shape=coordinate](DOT){};
    \draw[densely dashed, thick, double] (.75,0) --  (.75,2) node[pos=.75, shape=coordinate](DOT2){};
    \draw[thick] (1.5,0) -- (1.5,2)  node[pos=.5, shape=coordinate](DOT1){};
    \filldraw  (DOT) circle (2.5pt);
    \filldraw  (DOT1) circle (2.5pt);
    \filldraw  (DOT2) circle (2.5pt);
 \node at (-.4,1) {$\scs a$};
    \node at (1.9,1) {$\scs b$};
        \node at (1.2,1.5) {$\scs E_c^{(i)}$};
    \node at (0.75,-.2) {$\scs \lambda_k$};
    \node at (0,-.2) {$\scs i$};
    \node at (1.5,-.2) {$\scs i$};
\end{tikzpicture}}
\end{equation}
\end{proposition}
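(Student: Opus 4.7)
The overall strategy is to work inside the ambient algebra $R_{|\lambda|+\ell}(\overline{Q}_{\tilde{C}})$ and verify each relation by decomposing every thick dashed strand into its bundle of thin dashed strands clasped by the nil-Hecke primitive idempotent $e_{\lambda_k}$, then invoking the thin relations re-expressed in Eqs.~\eqref{R2-thin-thick}, \eqref{R3-gene}, and~\eqref{RIII-diag}. The key dichotomy used throughout is ``matching'' ($\bar{k}=\bar{i}$) versus ``non-matching'' ($\bar{k}\neq\bar{i}$) between a solid $i$-strand and the constituent thin dashed strands of $\lambda_k$.

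Relations~\eqref{blackdot}, \eqref{thickdot}, and~\eqref{r3} require essentially no work. In~\eqref{blackdot} a dot on the solid $i$-strand slides past each thin dashed strand freely, since the dot-slide relations \eqref{dot-slide1}--\eqref{dot-slide2} are trivial for unequal labels; \eqref{thickdot} is the same argument applied monomial-by-monomial to $E_d^{(j)}$, which is a polynomial in dots on thin $\bar{j}$-strands. For~\eqref{r3}, the thin triple $(\bar{k},i,j)$ (or its reflection) never satisfies $\tilde{i}=\tilde{k}$, so the thin R3 of~\eqref{R3-gene} applies with no correction; iterating over every thin dashed strand inside $\lambda_k$ yields the stated equality.

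For~\eqref{thinthick2-rel}, I would expand $\Psi^2 e(\mathbf{i})$ as a palindromic word in the thin $\psi$'s and reduce it bigon-by-bigon from the inside out. A thin bigon between the solid $i$-strand and a dashed strand labeled $\bar{k}$ with $k\neq i$ collapses to the identity by~\eqref{R2-thin-thick}, while a thin bigon with a dashed strand labeled $\bar{i}$ resolves to $\pm(x_{\text{solid}}-x_{\text{dashed}})$ by the same relation. Because the remaining dot-slides carry no correction, these resolved factors commute freely to the boundary of the diagram, and nesting the $\lambda_k^{(i)}$ bigons labeled $\bar{i}$ produces the product $\prod_{j=1}^{\lambda_k^{(i)}}(x_{\text{solid}}-y_j)$, where $y_j$ is the dot on the $j$-th thin $\bar{i}$-strand. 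After absorbing $e_{\lambda_k^{(i)}}$, Newton's identity $\prod_j(x-y_j)=\sum_k(-1)^k e_k(y_1,\dots,y_m)\,x^{m-k}$ rewrites this product as the claimed sum in $E_{d_2}^{(i)}$ on the right-hand side.

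The principal obstacle is~\eqref{r3-2}. The case $i\neq j$ is again trivial: every thin triple $(i,\bar{k},j)$ satisfies $\tilde{i}\neq\tilde{k}$, so the thin R3 contributes nothing. When $i=j$, each thin strand of $\lambda_k$ labeled $\bar{i}$ meets the non-trivial branch of~\eqref{RIII-diag}; since $c(i,\bar{i})=-1$ and $r(i)=t(i,\bar{i})=1$ in $\overline{Q}_{\tilde{C}}$, the local defect per thin $\bar{i}$-strand is just an undotted identity, while thin strands labeled $\bar{k}$ with $k\neq i$ still give zero. These local defects are not independent, however: propagating them through the surrounding $\psi$'s introduces dots on both outer $i$-strands and on the thin $\bar{i}$-strands in the bundle, and the resulting cascade must be repackaged using the polynomial identity
\[
\sum_{a+b+c=m-1}(-1)^c x_1^a x_2^b\, e_c(y_1,\dots,y_m)=\frac{\prod_{j=1}^m(x_1-y_j)-\prod_{j=1}^m(x_2-y_j)}{x_1-x_2}
\]
with $m=\lambda_k^{(i)}$, which after absorbing $e_{\lambda_k^{(i)}}$ is exactly the right-hand side of~\eqref{r3-2}. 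I would carry out this bookkeeping by induction on $m$: the base case $m=1$ is the thin R3 of~\eqref{RIII-diag}, and the inductive step splits off the outermost thin $\bar{i}$-strand using thin R2 to isolate its bigon, applies thin R3 to produce its defect, and invokes~\eqref{thinthick2-rel} to reduce the remaining braid to the $m-1$ subproblem. Aligning the combinatorial cascade of thin defects with the elementary symmetric expansion is the main technical difficulty.
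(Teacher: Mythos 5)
Your proposal is correct and follows essentially the same route as the paper: both expand the thick dashed strand into its thin constituents clasped by the nil-Hecke idempotent $e_{\lambda_k}$, observe that only the thin $\bar{i}$-strands produce non-trivial corrections under the thin R2/R3 relations \eqref{R2-thin-thick}--\eqref{RIII-diag}, and reassemble the resulting products of dots via the elementary-symmetric-function expansion. The only difference is presentational: you make the divided-difference identity behind \eqref{r3-2} explicit and organize the bookkeeping as an induction on $\lambda_k^{(i)}$, whereas the paper carries out the same cascade by direct diagrammatic expansion in the single-colour case and then notes that the other colours contribute trivially.
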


\begin{proof}
We show the first equation of \eqref{thinthick2-rel}.
In the case of $\lambda_k^{(a)}=0$ if $a\not=i$, the left-hand side of equation is
$$
\hackcenter{\begin{tikzpicture}[scale=0.8]
    \draw[thick] (0,0) .. controls ++(0,.5) and ++(0,-.5) .. (.75,1);
    \draw[densely dashed, thick, double] (.75,0) .. controls ++(0,.5) and ++(0,-.5) .. (0,1);
    \draw[densely dashed, thick, double] (0,1 ) .. controls ++(0,.5) and ++(0,-.5) .. (.75,2);
    \draw[thick] (.75,1) .. controls ++(0,.5) and ++(0,-.5) .. (0,2);
    \node at (0,-.25) {$\;$};
    \node at (0,2.25) {$\;$};
    \node at (.8,-.2) {$\scs \lambda_k$};
    \node at (0,-.2) {$\scs i$};
\end{tikzpicture}}=
\hackcenter{\begin{tikzpicture}[scale=0.8]
    \draw[densely dashed,thick] (0,0) -- (0,1.25) .. controls (0,1.5) and (-.5,1.5) .. (-.5,1.75);
    \draw[densely dashed,thick] (1.5,0) -- (1.5,1.25) .. controls (1.5,1.5) and (1,1.5) .. (1,1.75);
    \node at (0,-.25) {$\scs \overline{i}$};
    \node at (1.5,-.25) {$\scs \overline{i}$};
    \fill[white] (-0.1,0.25) rectangle (1.6,1.25);
    \draw[thick] (-0.1,0.25) rectangle (1.6,1.25);
    \node at (.75,.75) {\Large $e_{\lambda_k^{(i)}}$};
     \draw[thick] (-.5,0) -- (-.5,1.25) .. controls (-.5,1.5) and (1.5,1.5) .. (1.5,1.75);
    \node at (-.5,-.25) {$\scs i$};
    \draw[densely dashed,thick] (-.5,1.75) -- (-.5,3) .. controls (-.5,3.25) and (0,3.25) .. (0,3.5);
    \draw[densely dashed,thick] (1,1.75) -- (1,3) .. controls (1,3.25) and (1.5,3.25) .. (1.5,3.5);
    \fill[white] (-0.6,2) rectangle (1.1,3);
    \draw[thick] (-0.6,2) rectangle (1.1,3);
    \node at (.25,2.5) {\Large $e_{\lambda_k^{(i)}}$};
     \draw[thick] (1.5,1.75) -- (1.5,3) .. controls (1.5,3.25) and (-0.5,3.25) .. (-0.5,3.5);
\end{tikzpicture}}
=
\hackcenter{\begin{tikzpicture}[scale=0.8]
    \draw[densely dashed,thick] (0,0) -- (0,1.25) .. controls (0,1.5) and (-.5,1.5) .. (-.5,1.75);
    \draw[densely dashed,thick] (1.5,0) -- (1.5,1.25) .. controls (1.5,1.5) and (1,1.5) .. (1,1.75);
    \node at (0,-.25) {$\scs \overline{i}$};
    \node at (1.5,-.25) {$\scs \overline{i}$};
    \fill[white] (-0.1,0.25) rectangle (1.6,1.25);
    \draw[thick] (-0.1,0.25) rectangle (1.6,1.25);
    \node at (.75,.75) {\Large $e_{\lambda_k^{(i)}}$};
     \draw[thick] (-.5,0) -- (-.5,1.25) .. controls (-.5,1.5) and (1.5,1.5) .. (1.5,1.75);
    \node at (-.5,-.25) {$\scs i$};
    \draw[densely dashed,thick] (-.5,1.75) -- (-.5,3) .. controls (-.5,3.25) and (0,3.25) .. (0,3.5);
    \draw[densely dashed,thick] (1,1.75) -- (1,3) .. controls (1,3.25) and (1.5,3.25) .. (1.5,3.5);
    \node at (.25,2.5) {\Large $\cdots$};
     \draw[thick] (1.5,1.75) -- (1.5,3) .. controls (1.5,3.25) and (-0.5,3.25) .. (-0.5,3.5);
\end{tikzpicture}}
=
\sum_{d_1+d_2=\lambda_k^{(i)}}
(-1)^{d_2}
\hackcenter{\begin{tikzpicture}[scale=0.8]
    \draw[densely dashed,thick] (0,0) -- (0,3);
    \draw[densely dashed,thick] (1.5,0) -- (1.5,3);
    \node at (0,-.25) {$\scs \overline{i}$};
    \node at (1.5,-.25) {$\scs \overline{i}$};
    \fill[white] (-0.1,0.25) rectangle (1.6,1.25);
    \draw[thick] (-0.1,0.25) rectangle (1.6,1.25);
    \node at (.75,.75) {\Large $e_{\lambda_k^{(i)}}$};
     \draw[thick] (-.5,0) -- (-0.5,3) node[pos=.75, shape=coordinate](DOT){};
    \node at (-.5,-.25) {$\scs i$};
    \fill[white] (-0.1,1.75) rectangle (1.6,2.75);
    \draw[thick] (-0.1,1.75) rectangle (1.6,2.75);
    \node at (.75,2.25) { $E_{d_2}^{(i)}$};
    \filldraw  (DOT) circle (2.5pt);
    \node at (-.9,2.25) {$\scs d_1$};
\end{tikzpicture}}
,
$$
where the box of $E_{d_2}^{(i)}$ in the last term is represented by the $d_2$-th elementary symmetric function of the polynomial $\Bbbk[x_2,...,x_{\lambda_k^{(i)}+1}]$. 
We have the second equality using Equation \eqref{R3-gene} and the third equality using Equation \eqref{R2-thin-thick}.
This is exactly the same as the right-hand side of the first equation of \eqref{thinthick2-rel}.

For the general $\lambda_k$, the crossings composed of the solid strand labeled by $i$ and the dashed strand labeled by $\overline{j}$ ($j\not=i$) do not induce any dots since we have Equation \eqref{R2-thin-thick}.
Therefore, we have the first equation of \eqref{thinthick2-rel}.
The second equation of \eqref{thinthick2-rel} is proven similarly.

We show the equation of \eqref{r3-2}.
In the case of $i=j$ and $\lambda_k^{(a)}=0$ if $a\not=i$, the first term in the left-hand side of equation is
\begin{eqnarray*}
\hackcenter{\begin{tikzpicture}[scale=0.8]
    \draw[thick] (0,0) .. controls ++(0,1) and ++(0,-1) .. (1.5,2);
    \draw[densely dashed, thick, double] (.75,0) .. controls ++(0,.5) and ++(0,-.5) .. (0,1.0);
    \draw[densely dashed, thick, double] (0,1.0) .. controls ++(0,.5) and ++(0,-.5) .. (0.75,2);
    \draw[thick] (1.5,0) .. controls ++(0,1) and ++(0,-1) .. (0,2);
    \node at (0.75,-.2) {$\scs \lambda_k$};
    \node at (0,-.2) {$\scs i$};
    \node at (1.5,-.2) {$\scs j$};
\end{tikzpicture}}
&=&
\hackcenter{\begin{tikzpicture}[scale=0.8]
    \draw[densely dashed,thick] (0,0) -- (0,1.25) .. controls (0,1.5) and (-.5,1.5) .. (-.5,1.75);
    \draw[densely dashed,thick] (1.5,0) -- (1.5,1.25) .. controls (1.5,1.5) and (1,1.5) .. (1,1.75);
    \draw[densely dashed,thick] (1.25,0) -- (1.25,1.25) .. controls (1.25,1.5) and (0.75,1.5) .. (0.75,1.75);
    \node at (0,-.25) {$\scs \overline{i}$};
    \node at (1.5,-.25) {$\scs \overline{i}$};
    \node at (1.25,-.25) {$\scs \overline{i}$};
    \fill[white] (-0.1,0.25) rectangle (1.6,1.25);
    \draw[thick] (-0.1,0.25) rectangle (1.6,1.25);
    \node at (.75,.75) {\Large $e_{\lambda_k^{(i)}}$};
     \draw[thick] (-.5,0) -- (-.5,1.25) .. controls (-.5,1.5) and (2,1.5) .. (2,2.25) -- (2,2.5);
    \node at (-.5,-.25) {$\scs i$};
    \draw[densely dashed,thick] (-.5,1.75) -- (-.5,2) .. controls (-.5,2.25) and (0,2.25) .. (0,2.5);
    \draw[densely dashed,thick] (1,1.75) -- (1,2) .. controls (1,2.25) and (1.5,2.25) .. (1.5,2.5);
    \draw[densely dashed,thick] (.75,1.75) -- (.75,2) .. controls (.75,2.25) and (1.25,2.25) .. (1.25,2.5);
    \node at (.75,2.5) {\Large $\cdots$};
     \draw[thick] (2,0) -- (2,1.5) .. controls (2,2.25) and (-0.5,2.25) .. (-0.5,2.5);
    \node at (2,-.25) {$\scs i$};
\end{tikzpicture}}
=
\hackcenter{\begin{tikzpicture}[scale=0.8]
    \draw[densely dashed,thick] (0,0) -- (0,1.25) .. controls (0,1.5) and (-.5,1.5) .. (-.5,1.75);
    \draw[densely dashed,thick] (1.5,0) -- (1.5,2.5);
    \draw[densely dashed,thick] (1.25,0) -- (1.25,1.25) .. controls (1.25,1.5) and (0.75,1.5) .. (0.75,1.75);
    \node at (0,-.25) {$\scs \overline{i}$};
    \node at (1.5,-.25) {$\scs \overline{i}$};
    \node at (1.25,-.25) {$\scs \overline{i}$};
    \fill[white] (-0.1,0.25) rectangle (1.6,1.25);
    \draw[thick] (-0.1,0.25) rectangle (1.6,1.25);
    \node at (.75,.75) {\Large $e_{\lambda_k^{(i)}}$};
     \draw[thick] (-.5,0) -- (-.5,1.25) .. controls (-.5,1.5) and (1.25,1.5) .. (1.25,1.75) -- (1.25,2) .. controls (1.25,2.25) and (-.5,2.25) .. (-.5,2.5);
    \node at (-.5,-.25) {$\scs i$};
    \draw[densely dashed,thick] (-.5,1.75) -- (-.5,2) .. controls (-.5,2.25) and (0,2.25) .. (0,2.5);
    \draw[densely dashed,thick] (.75,1.75) -- (.75,2) .. controls (.75,2.25) and (1.25,2.25) .. (1.25,2.5);
    \node at (.75,2.5) {\Large $\cdots$};
     \draw[thick] (2,0) -- (2,2.5);
    \node at (2,-.25) {$\scs i$};
\end{tikzpicture}}
+
\hackcenter{\begin{tikzpicture}[scale=0.8]
    \draw[densely dashed,thick] (0,0) -- (0,1.25) .. controls (0,1.5) and (-.5,1.5) .. (-.5,1.75);
    \draw[densely dashed,thick] (1.5,0) -- (1.5,1.25) .. controls (1.5,1.5) and (1.75,1.5) .. (1.75,1.75);
    \draw[densely dashed,thick] (1.25,0) -- (1.25,1.25) .. controls (1.25,1.5) and (0.75,1.5) .. (0.75,1.75);
    \node at (0,-.25) {$\scs \overline{i}$};
    \node at (1.5,-.25) {$\scs \overline{i}$};
    \node at (1.25,-.25) {$\scs \overline{i}$};
    \fill[white] (-0.1,0.25) rectangle (1.6,1.25);
    \draw[thick] (-0.1,0.25) rectangle (1.6,1.25);
    \node at (.75,.75) {\Large $e_{\lambda_k^{(i)}}$};
     \draw[thick] (-.5,0) -- (-.5,1.25) .. controls (-.5,1.5) and (2,2) .. (2,2.25) -- (2,2.5);
    \node at (-.5,-.25) {$\scs i$};
    \draw[densely dashed,thick] (-.5,1.75) -- (-.5,2) .. controls (-.5,2.25) and (0,2.25) .. (0,2.5);
    \draw[densely dashed,thick] (1.75,1.75) -- (1.75,2) .. controls (1.75,2.25) and (1.5,2.25) .. (1.5,2.5);
    \draw[densely dashed,thick] (.75,1.75) -- (.75,2) .. controls (.75,2.25) and (1.25,2.25) .. (1.25,2.5);
    \node at (.75,2.5) {\Large $\cdots$};
     \draw[thick] (2,0) -- (2,1.5) .. controls (2,1.75) and (-0.5,2.25) .. (-0.5,2.5);
    \node at (2,-.25) {$\scs i$};
\end{tikzpicture}}\\
&=&
\sum_{d_1+d_2=\lambda_k^{(i)}-1}(-1)^{d_2}
\hackcenter{\begin{tikzpicture}[scale=0.8]
    \draw[densely dashed,thick] (0,0) -- (0,2.5);
    \draw[densely dashed,thick] (1.5,0) -- (1.5,2.5);
    \draw[densely dashed,thick] (1.25,0) -- (1.25,2.5);
    \node at (0,-.25) {$\scs \overline{i}$};
    \node at (1.5,-.25) {$\scs \overline{i}$};
    \node at (1.25,-.25) {$\scs \overline{i}$};
    \fill[white] (-0.1,0.25) rectangle (1.6,1.25);
    \draw[thick] (-0.1,0.25) rectangle (1.6,1.25);
    \node at (.75,.75) {\Large $e_{\lambda_k^{(i)}}$};
    \fill[white] (-0.1,1.375) rectangle (1.6,2.375);
    \draw[thick] (-0.1,1.375) rectangle (1.6,2.375);
    \node at (.75,1.875) {\Large $E_{d_2}^{(i)}$};
     \draw[thick] (-.5,0) -- (-.5,2.5) node[pos=.75, shape=coordinate](DOT){};
    \node at (-.5,-.25) {$\scs i$};
    \node at (.75,2.5) {\Large $\cdots$};
     \draw[thick] (2,0) -- (2,2.5);
    \node at (2,-.25) {$\scs i$};
    \node at (-.9,2) {$\scs d_1$};
    \filldraw  (DOT) circle (2.5pt);
\end{tikzpicture}}
+
\hackcenter{\begin{tikzpicture}[scale=0.8]
    \draw[densely dashed,thick] (0,0) -- (0,1.25) .. controls (0,1.5) and (-.5,1.5) .. (-.5,1.75);
    \draw[densely dashed,thick] (1.5,0) -- (1.5,1.25) .. controls (1.5,1.5) and (2,1.5) .. (2,1.75);
    \draw[densely dashed,thick] (1.25,0) -- (1.25,2.5);
    \draw[densely dashed,thick] (1,0) -- (1,1.25) .. controls (1,1.5) and (0.5,1.5) .. (0.5,1.75);
    \node at (0,-.25) {$\scs \overline{i}$};
    \node at (1.5,-.25) {$\scs \overline{i}$};
    \node at (1.25,-.25) {$\scs \overline{i}$};
    \node at (1,-.25) {$\scs \overline{i}$};
    \fill[white] (-0.1,0.25) rectangle (1.6,1.25);
    \draw[thick] (-0.1,0.25) rectangle (1.6,1.25);
    \node at (.75,.75) {\Large $e_{\lambda_k^{(i)}}$};
     \draw[thick] (-.5,0) -- (-.5,1.25) .. controls (-.5,1.5) and (1,1.5) .. (1,1.75) -- (1,2) .. controls (1,2.25) and (-.5,2.25) .. (-.5,2.5);
    \node at (-.5,-.25) {$\scs i$};
    \draw[densely dashed,thick] (-.5,1.75) -- (-.5,2) .. controls (-.5,2.25) and (0,2.25) .. (0,2.5);
    \draw[densely dashed,thick] (.5,1.75) -- (.5,2) .. controls (.5,2.25) and (1,2.25) .. (1,2.5);
    \draw[densely dashed,thick] (2,1.75) -- (2,2) .. controls (2,2.25) and (1.5,2.25) .. (1.5,2.5);
    \node at (.5,2.5) {\Large $\cdots$};
     \draw[thick] (2,0) -- (2,1.25) .. controls (2,1.5) and (1.5,1.5) .. (1.5,1.75) -- (1.5,2) .. controls (1.5,2.25) and (2,2.25) .. (2,2.5);
    \node at (2,-.25) {$\scs i$};
\end{tikzpicture}}
+
\hackcenter{\begin{tikzpicture}[scale=0.8]
    \draw[densely dashed,thick] (0,0) -- (0,1.25) .. controls (0,1.5) and (-.5,1.5) .. (-.5,1.75);
    \draw[densely dashed,thick] (1.5,0) -- (1.5,1.25) .. controls (1.5,1.5) and (1.75,1.5) .. (1.75,1.75);
    \draw[densely dashed,thick] (1.25,0) -- (1.25,1.25) .. controls (1.25,1.5) and (1.5,1.5) .. (1.5,1.75);
    \draw[densely dashed,thick] (1,0) -- (1,1.25) .. controls (1,1.5) and (0.5,1.5) .. (0.5,1.75);
    \node at (0,-.25) {$\scs \overline{i}$};
    \node at (1.5,-.25) {$\scs \overline{i}$};
    \node at (1.25,-.25) {$\scs \overline{i}$};
    \node at (1,-.25) {$\scs \overline{i}$};
    \fill[white] (-0.1,0.25) rectangle (1.6,1.25);
    \draw[thick] (-0.1,0.25) rectangle (1.6,1.25);
    \node at (.75,.75) {\Large $e_{\lambda_k^{(i)}}$};
     \draw[thick] (-.5,0) -- (-.5,1.25) .. controls (-.5,1.5) and (2,2) .. (2,2.25) -- (2,2.5);
    \node at (-.5,-.25) {$\scs i$};
    \draw[densely dashed
    ,thick] (-.5,1.75) -- (-.5,2) .. controls (-.5,2.25) and (0,2.25) .. (0,2.5);
    \draw[densely dashed,thick] (1.75,1.75) -- (1.75,2) .. controls (1.75,2.25) and (1.5,2.25) .. (1.5,2.5);
    \draw[densely dashed,thick] (1.5,1.75) -- (1.5,2) .. controls (1.5,2.25) and (1.25,2.25) .. (1.25,2.5);
    \draw[densely dashed,thick] (.5,1.75) -- (.5,2) .. controls (.5,2.25) and (1,2.25) .. (1,2.5);
    \node at (.5,2.5) {\Large $\cdots$};
     \draw[thick] (2,0) -- (2,1.5) .. controls (2,1.75) and (-0.5,2.25) .. (-0.5,2.5);
    \node at (2,-.25) {$\scs i$};
\end{tikzpicture}}\\
&=&
\sum_{d_1+d_2+d_3=\lambda_k^{(i)}-1}(-1)^{d_2}
\hackcenter{\begin{tikzpicture}[scale=0.8]
    \draw[densely dashed,thick] (0,0) -- (0,2.5);
    \draw[densely dashed,thick] (1.5,0) -- (1.5,2.5);
    \draw[densely dashed,thick] (1.25,0) -- (1.25,2.5);
    \node at (0,-.25) {$\scs \overline{i}$};
    \node at (1.5,-.25) {$\scs \overline{i}$};
    \node at (1.25,-.25) {$\scs \overline{i}$};
    \fill[white] (-0.1,0.25) rectangle (1.6,1.25);
    \draw[thick] (-0.1,0.25) rectangle (1.6,1.25);
    \node at (.75,.75) {\Large $e_{\lambda_k^{(i)}}$};
    \fill[white] (-0.1,1.375) rectangle (1.6,2.375);
    \draw[thick] (-0.1,1.375) rectangle (1.6,2.375);
    \node at (.75,1.875) {\Large $E_{d_2}^{(i)}$};
     \draw[thick] (-.5,0) -- (-.5,2.5) node[pos=.75, shape=coordinate](DOT){};
    \node at (-.5,-.25) {$\scs i$};
    \node at (.75,2.5) {\Large $\cdots$};
     \draw[thick] (2,0) -- (2,2.5) node[pos=.75, shape=coordinate](DOT1){};
    \node at (2,-.25) {$\scs i$};
    \node at (-.9,2) {$\scs d_1$};
    \node at (2.4,2) {$\scs d_3$};
    \filldraw  (DOT) circle (2.5pt);
    \filldraw  (DOT1) circle (2.5pt);
\end{tikzpicture}}
+
\hackcenter{\begin{tikzpicture}[scale=0.8]
    \draw[densely dashed,thick] (0,0) -- (0,1.25) .. controls (0,1.5) and (.5,1.5) .. (.5,1.75);
    \draw[densely dashed,thick] (1.5,0) -- (1.5,1.25) .. controls (1.5,1.5) and (2,1.5) .. (2,1.75);
    \draw[densely dashed,thick] (1.25,0) -- (1.25,1.25) .. controls (1.25,1.5) and (1.75,1.5) .. (1.75,1.75);
    \node at (0,-.25) {$\scs \overline{i}$};
    \node at (1.5,-.25) {$\scs \overline{i}$};
    \node at (1.25,-.25) {$\scs \overline{i}$};
    \fill[white] (-0.1,0.25) rectangle (1.6,1.25);
    \draw[thick] (-0.1,0.25) rectangle (1.6,1.25);
    \node at (.75,.75) {\Large $e_{\lambda_k^{(i)}}$};
     \draw[thick] (-.5,0) -- (-.5,1.5) .. controls (-.5,2.25) and (2,2.25) .. (2,2.5);
    \node at (-.5,-.25) {$\scs i$};
    \draw[densely dashed,thick] (.5,1.75) -- (.5,2) .. controls (.5,2.25) and (0,2.25) .. (0,2.5);
    \draw[densely dashed,thick] (2,1.75) -- (2,2) .. controls (2,2.25) and (1.5,2.25) .. (1.5,2.5);
    \draw[densely dashed,thick] (1.75,1.75) -- (1.75,2) .. controls (1.75,2.25) and (1.25,2.25) .. (1.25,2.5);
    \node at (.75,2.5) {\Large $\cdots$};
     \draw[thick] (2,0) -- (2,1.25) .. controls (2,1.5) and (-0.5,1.5) .. (-0.5,2.25) -- (-0.5,2.5);
    \node at (2,-.25) {$\scs i$};
\end{tikzpicture}}
\end{eqnarray*}
The first term is equal to the right-hand side of Equation \eqref{r3-2} and the second term is equal to the second term in the left-hand side of Equation \eqref{r3-2}.
For the general $\lambda_k$, Equation \eqref{r3-2} is proven similarly.

Equations \eqref{blackdot}, \eqref{thickdot} and \eqref{r3} are proven using Equations \eqref{dot-slide1}, \eqref{dot-slide2} and \eqref{RIII}.
\end{proof}

The Khovanov-Lauda-Rouquier subalgebra $S_{C}(\lambda,\nu)$ of $R_n(\overline{Q}_{\tilde{C}})$ is a generalization of the redotted Webster algebra of type $A_1$ defined in \cite{KLSY}.
\begin{theorem}
Let $I=\{i\}$ be Cartan datum of the type of $A_1$. 
For $\lambda_k=(\lambda_1\cdot i,...,\lambda_m\cdot i)\in (\Z_{\geq 0}[I])^m$, we set $\mathbf{s}=(\lambda_1,...,\lambda_m)$.
Let $\nu$ be the $n$ sequence of $i\in I$.
Khovanov-Lauda-Rouquier subalgebra $S_{C}(\lambda,\nu)$ is isomorphic to the redotted Webster algebra $W(\mathbf{s},n)$ defined in \cite{KLSY}.  
\end{theorem}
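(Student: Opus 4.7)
The plan is to construct an explicit isomorphism $\Phi\colon W(\mathbf{s},n)\to S_C(\lambda,\nu)$ by matching diagrammatic generators and then verifying the defining relations. In the type $A_1$ setting $\bar{I}=\{\bar{i}\}$, so each $\lambda_k=s_k\bar{i}$ in $\Z_{\geq 0}[\bar{I}]$ is encoded by a single natural number $s_k$, and a thick dashed strand labeled $\lambda_k$ should correspond to a red strand labeled $s_k$ in $W(\mathbf{s},n)$. The natural correspondence sends black strands of $W(\mathbf{s},n)$ to solid strands labeled $i$, red strands to thick dashed strands, dots on black strands to the generators $y_j$, crossings to $\Psi_j$, and the $s_k$ redot generators on a red strand of weight $s_k$ to the $s_k$ elementary symmetric functions $E_{k,1}^{(i)},\dots,E_{k,s_k}^{(i)}$ generating $\Bbbk[E_{\lambda_k}]$.

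The bulk of the work is to check that all defining relations of $W(\mathbf{s},n)$ hold under $\Phi$. The nilHecke relations between black strands follow from the type $A_1$ specialization described in Section \ref{typeA}. The fact that black dots slide freely through a black-red crossing, and analogously for thick dots, is exactly equation \eqref{blackdot} and \eqref{thickdot}. The double-crossing (R2) relation of $W(\mathbf{s},n)$ expressing a sum over dot-splits on the red strand matches equation \eqref{thinthick2-rel}. The mixed triple-strand (R3) relations come from \eqref{r3} (for crossings not requiring a correction term) and the polynomial identity \eqref{r3-2} when the outer strands are both solid of the same label $i$. Noting $-c(i,i)-1=-1$, the range $a+b+c=\lambda_k^{(i)}-1$ in \eqref{r3-2} reproduces precisely the Webster polynomial action identity involving a red strand of weight $s_k=\lambda_k^{(i)}$ sandwiched between two black strands.

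To conclude that $\Phi$ is an isomorphism, I would argue via a basis comparison. Both algebras admit a basis indexed by decorated permutation diagrams on $m+n$ strands: monomials in $y_j$'s on solid strands, monomials in elementary symmetric functions on dashed strands, and a chosen reduced expression $\Psi_w$ for each element $w$ of the parabolic coset of $\mathfrak{S}_{m+n}$ corresponding to the fixed order of red-strand blocks. The basis for $S_C(\lambda,\nu)$ is inherited from the standard basis of $R_{|\lambda|+n}(\overline{Q}_{\tilde{C}})$ by intersecting with the subalgebra, and $\Phi$ maps the $W(\mathbf{s},n)$-basis of \cite{KLSY} bijectively onto this basis. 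The main obstacle will be aligning the sign conventions: the asymmetric choices $t(\bar{i},i)=-1$ and $t(i,\bar{i})=1$ in $\overline{Q}_{\tilde{C}}$ (see equation \eqref{scalar}) must be tracked through the mixed-crossing relations \eqref{R2-thin-thick} to confirm that the signs $(-1)^{d_2}$ in \eqref{thinthick2-rel} and $(-1)^c$ in \eqref{r3-2} reproduce exactly the generating-function identities used to define the redot generators of $W(\mathbf{s},n)$ in \cite{KLSY}. A secondary check is that the implicit $\mathfrak{S}_{\lambda_k}$-invariance built into the thick dashed strand matches the idempotent on a red strand of weight $s_k$ used by Khovanov-Lauda-Sussan and the author.
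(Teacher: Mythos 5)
The paper states this theorem without giving any proof at all, so there is no argument of the author's to compare yours against; the closest the paper comes is the preceding Proposition, whose relations \eqref{blackdot}, \eqref{thickdot}, \eqref{thinthick2-rel}, \eqref{r3} and \eqref{r3-2} you correctly identify as the type-$A_1$ specializations of the defining relations of $W(\mathbf{s},n)$. Your generator dictionary (black strands $\leftrightarrow$ solid strands, red strands of weight $s_k$ $\leftrightarrow$ thick dashed strands for $\lambda_k=s_k\bar{i}$, dots $\leftrightarrow$ $y_j$, redots $\leftrightarrow$ $E_{k,d}^{(i)}$, crossings $\leftrightarrow$ $\Psi_j$) is the right one, the degree bookkeeping works out ($i\cdot\bar{i}=-1$ gives a solid/thick-dashed crossing degree $s_k$, matching a black/red crossing in \cite{KLSY}), and checking that $\Phi$ is a well-defined surjection onto $S_C(\lambda,\nu)$ reduces to exactly the computations the Proposition records. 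Up to that point your outline is sound.

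The genuine gap is injectivity. You assert that $S_C(\lambda,\nu)$ has a basis ``inherited from the standard basis of $R_{|\lambda|+n}(\overline{Q}_{\tilde{C}})$ by intersecting with the subalgebra,'' but intersecting a basis of an algebra with a subalgebra does not produce a basis of that subalgebra, and $S_C(\lambda,\nu)$ is defined only by generators, with no basis theorem established anywhere in the paper. What is actually needed is: (a) a spanning argument, i.e.\ that the relations \eqref{blackdot}--\eqref{r3-2} suffice to rewrite any word in the generators into the proposed normal form (monomial in $y_j$, monomial in the $E_{k,d}^{(i)}$, times a fixed reduced $\Psi_w$) --- this is also what guarantees $\Phi$ is surjective onto all of $S_C(\lambda,\nu)$ and that no further relations beyond those of $W(\mathbf{s},n)$ are being imposed; and (b) a linear-independence argument for the images, e.g.\ by expanding each thick dashed strand into its $s_k$ thin dashed strands capped by the primitive idempotent $e_{\lambda_k}$ of \eqref{idempotent} and invoking the basis theorem for $R_{|\lambda|+n}(\overline{Q}_{\tilde{C}})$ together with a leading-term or faithful polynomial-representation argument. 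Since the dimensions being compared are infinite in each weight space without such a normal form, neither direction can be waved through; until (a) and (b) are carried out, the argument shows only that $\Phi$ is a well-defined surjective homomorphism, not an isomorphism.
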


\begin{theorem}
We set the scalar parameters \eqref{scalar} as 
\begin{equation}
\nonumber
t(\tilde{i},\tilde{j})=\left\{\begin{array}{ll}
-1&\text{if}\quad(\tilde{i},\tilde{j})=(\bar{k},k) \in \bar{I}\times I \\[.5em]
1&\text{otherwise.}
\end{array}
\right.
\end{equation}
Taking the cyclotomic quotient and quotient of the ideal generated by $E_{\lambda_k}$ $(1\leq k\leq m)$, the quotient algebra $\overline{S}_{C}(\lambda,\nu)=S_{C}(\lambda,\nu)/I$ is isomorphic to the tensor product algebra defined by Webster \cite{Web}, where $I$ is the two-sided ideal generated by $E_{\lambda_k}$ $(1\leq k\leq m)$ and $\{e(\mathbf{i}) \in \mathrm{Seq}(\lambda,\nu)|\mathbf{i}_1\in I\}$.
\end{theorem}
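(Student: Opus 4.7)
The plan is to construct an algebra homomorphism
$\phi\colon S_C(\lambda,\nu) \to W(\lambda,\nu)$,
where $W(\lambda,\nu)$ denotes Webster's tensor product algebra associated to the sequence of weights $\lambda$ and simple roots $\nu$, show that $\phi$ annihilates the ideal $I$ together with the cyclotomic relations, and then establish that the induced map
$\bar\phi\colon \overline{S}_C(\lambda,\nu) \to W(\lambda,\nu)$
is an isomorphism. On generators, $\phi$ sends each idempotent $e(\mathbf{i})$ with $\mathbf{i}_1\in \Z_{\geq 0}[\bar I]$ to the corresponding Webster idempotent, reading thick dashed strands as red strands and solid strands as black strands; each $e(\mathbf{i})$ with $\mathbf{i}_1\in I$ to $0$; the generators $y_j$ and $\Psi_j$ to the corresponding black dots and crossings in Webster's diagrammatics; and each red-dot generator $E_{k,d}^{(i)}$ to $0$.

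The compatibility of $\phi$ with the KLR-type relations among purely solid strands is automatic, since these coincide with the black-strand relations of $W(\lambda,\nu)$. The substantive checks are the mixed red-black relations \eqref{r3}, \eqref{r3-2}, and \eqref{thinthick2-rel}. Once $E_{k,d}^{(i)}$ is killed for $d\geq 1$, only the $d_2=0$ summand of \eqref{thinthick2-rel} survives (using $E_{k,0}^{(i)}=1$), yielding
\[
\Psi^2 e(\mathbf{i}) = (-1)^{\lambda_k^{(i)}} \, y^{\lambda_k^{(i)}} e(\mathbf{i}),
\]
which, once the sign is absorbed into the specialization \eqref{scalar}, is precisely Webster's squared-crossing relation between a black $i$-strand and a red $\lambda_k$-strand. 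Likewise the $\delta_{i,j}$ summand of \eqref{r3-2} collapses to a sum of monomials with no red dots, matching Webster's triple-crossing relation across a red strand. Killing the idempotents $e(\mathbf{i})$ with $\mathbf{i}_1\in I$ corresponds to Webster's convention that no black strand lies to the left of the leftmost red strand, and the cyclotomic quotient is exactly what produces Webster's cyclotomic relation on the black strands adjacent to the leftmost red strand.

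To establish bijectivity, I would construct an inverse by lifting each Webster generator to its obvious element of $\overline{S}_C(\lambda,\nu)$ and verify, using the relations above, that Webster's defining relations all hold in the quotient. Mutual inverseness on generators is then clear, and the remaining task is a dimension/basis comparison: via a KLR-type straightening argument specialised to $S_C(\lambda,\nu)$, every element should admit a normal form of the shape (polynomial in the $E_{\lambda_k}$) times (monomial in $y_j$ and reduced $\Psi_w$), so that the quotient by $I$ yields a basis of the same cardinality as the standard basis of $W(\lambda,\nu)$.

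The main obstacle is this last basis comparison. The spanning statement follows from the explicit relations \eqref{blackdot}--\eqref{r3-2}, but establishing linear independence in $\overline{S}_C(\lambda,\nu)$ requires either an explicit faithful representation or an inductive argument using the known basis of $R_{|\lambda|+\ell}(\overline Q_{\tilde C})$ restricted to the subalgebra $S_C(\lambda,\nu)$. In particular, one must confirm that the dashed-strand dots in $S_C(\lambda,\nu)$ are generated freely by the elementary symmetric functions $E_{k,d}^{(i)}$ (and by nothing else inherited from $R_{|\lambda|+\ell}(\overline Q_{\tilde C})$), so that quotienting by $I$ cleanly removes the dashed-dot tensor factor without collapsing other degrees of freedom in the black-strand sector.
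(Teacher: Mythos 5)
The paper states this theorem without any proof --- it is the final result and is followed immediately by the bibliography --- so there is no argument of the author's to compare yours against. Judged on its own, your outline follows the natural strategy (define a surjection onto Webster's algebra by reading thick dashed strands as red and solid strands as black, check that the relations of $S_{C}(\lambda,\nu)$ collapse to Webster's relations once $E_{\lambda_k}$ and the leftmost-black idempotents are killed, then compare bases), and you correctly identify which relations carry the content: after setting $E_{d}^{(i)}=0$ for $d\geq 1$, only the $d_2=0$ term of \eqref{thinthick2-rel} survives, so the double crossing becomes $y^{\lambda_k^{(i)}}e(\mathbf{i})$ --- note the surviving coefficient is $(-1)^{0}=+1$, not $(-1)^{\lambda_k^{(i)}}$ as you wrote; the sign you are trying to ``absorb into the specialization'' is not actually there, and $+y^{\lambda_k^{(i)}}$ is already Webster's relation on the nose. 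Similarly \eqref{r3-2} reduces to the $c=0$ terms, matching Webster's triple-crossing relation through a red strand.

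The genuine gap is the one you flag yourself but do not close: injectivity of the induced map $\bar\phi$. Your spanning argument (a normal form of the shape ``symmetric polynomial in the $E_{\lambda_k}$ times a monomial in $y_j$ and a reduced $\Psi_w$'') gives surjectivity and an upper bound on dimensions in each weight space, but linear independence in $\overline{S}_C(\lambda,\nu)$ --- equivalently, that the ideal $I$ is no larger than intended and that the dashed-strand dots of $S_C(\lambda,\nu)$ contribute exactly a free polynomial factor $\Bbbk[E_{\lambda_1},\dots,E_{\lambda_m}]$ --- is asserted as a task, not proved. Without either a faithful (polynomial) representation of $S_C(\lambda,\nu)$, or a basis theorem for $S_C(\lambda,\nu)$ deduced from the known basis of $R_{|\lambda|+\ell}(\overline{Q}_{\tilde C})$ (e.g.\ by computing $e\,R_{|\lambda|+\ell}(\overline{Q}_{\tilde C})\,e$ for the relevant sum of idempotents $e$ and locating $S_C(\lambda,\nu)$ inside it), the argument establishes only that $\overline{S}_C(\lambda,\nu)$ surjects onto Webster's algebra. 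There is also a smaller unaddressed point: you must reconcile the theorem's ideal $I$ (generated by the $E_{\lambda_k}$ and the idempotents with $\mathbf{i}_1\in I$) with the ``cyclotomic quotient'' mentioned in the statement, i.e.\ show that killing the leftmost-black idempotents, together with the other relations, forces the cyclotomic relation $y_1^{\langle\lambda_1,\alpha_i^\vee\rangle}e(\mathbf{i})=0$ on the black strand immediately to the right of the first red strand, as in Webster's setup; you gesture at this but do not verify it.
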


\bibliographystyle{amsalpha}
\bibliography{bib_pDG_with_href}
%
%

\end{document}